\newtheorem{Exaexa}{Example}[section]
\numberwithin{equation}{section}
\newtheorem{thm}{Theorem}[section]
\theoremstyle{definition}
\newtheorem{rem}{Remark}[section]
\theoremstyle{plain}
\newtheorem{lem}[thm]{Lemma}
\begin{document}

\begin{center}{\large{\bf{Explicit factorization of $x^{2^nd}-1$ over a finite field}}}\end{center}
\begin{center} Manjit Singh\\\texttt{ manjitsingh.math@gmail.com}\\
Department of Mathematics\\
D.C.R. University of Science \& Technology,
Murthal-131039, India\end{center}

\begin{abstract} Let $\mathbb{F}_q$ be a finite field  of odd characteristic containing  $q$ elements and integer $n\ge 1$.  In this paper,   the explicit factorization of $x^{2^nd}-1$ over $\mathbb{F}_q$ is obtained when $d$ is an odd divisor of $q+1$.   
\\
\textbf{Keywords}: Finite fields, Irreducible factorization; Cyclotomic polynomials.\medskip \\
\textbf{Mathematics Subject Classification}: 12T05; 12E10; 12E20. \end{abstract}
\section{Introduction}
 Throughout this paper, $\mathbb{F}_q$ denotes a finite field with $q$ elements, where $q$ is  an odd prime power. Let $n$ be a positive integer such that $\gcd(n,q)=1$ and $\zeta$ be a primitive $n$th root of unity over $\mathbb{F}_q$. Then the polynomial $$\Phi_{n}(x)=\prod_{\substack{j=1\\\gcd(j,n)=1}}^{n}(x-\zeta^j)$$      is  called the $n$th cyclotomic polynomial  over $\mathbb{F}_q$.  The degree of $\Phi_n(x)$ is $\phi(n)$, where $\phi(n)$ is the Euler Totient function. Note that if $k$ divides $n$, then  each primitive $k$th root of unity is also an $n$th root  of unity. An important relation  between $k$th cyclotomic polynomial and $n$th root of unity is \begin{center}$\displaystyle{x^n-1=\prod_{k|n}\Phi_k(x)}$.\end{center}  Applying the M\"{o}bius inversion formula to the above equation, we find that \begin{center}$\displaystyle{\Phi_n(x)=\prod_{k|n}(x^k-1)^{\mu(\frac{n}{k})}=\prod_{k|n}(x^{\frac{n}{k}}-1)^{\mu(k)}}$,\end{center} where $\mu$ is the M\"{o}bius function. Moreover, if  $e$ is the least positive integer satisfying $q^e\equiv1\pmod n$, then $\Phi_n(x)$ splits into the product of $\phi(n)/e$ monic irreducible polynomials over $\mathbb{F}_q$ of degree $e$. In particular, $\Phi_n(x)$ is irreducible over $\mathbb{F}_q$ if and only if $e=\phi(n)$ (\cite{lidl,roman}). 

 Factoring polynomials over finite fields  is a classical topic of mathematics and play important roles in cryptography, coding theory and communication theory (see \cite{ber, gol, lidl}).   In recent years, the factorization of polynomials $x^n-1$ and $\Phi_n(x)$ have received extensive attention from researchers.  A brief description of some of the past accomplishments regarding the factorization of $x^{n}-1$ and $\Phi_{n}(x)$ over $\mathbb{F}_q$ is given below.

   The factorization of $\Phi_{2^n}(x)$ over $\mathbb{F}_q$ when $q\equiv1\pmod 4$ can be found in \cite{lidl, wan}. In  1996, Meyn  \cite{meyn} obtained the factorization of $\Phi_{2^n}(x)$ over $\mathbb{F}_q$ when $q\equiv3\pmod 4$. Fitzgerald and Yucas \cite{fitz7} studied the explicit factorization of $\Phi_{2^nr}(x)$ over $\mathbb{F}_q$, where $r$ is an odd prime such that $q\equiv\pm1\pmod r$. They also obtained explicit irreducible factors of $\Phi_{2^n3}(x)$. Wang and Wang \cite{ww} gave the explicit factorization of $\Phi_{2^n5}(x)$.     Stein \cite{stein} obtained the factors of $\Phi_{r}(x)$ when $q$ and  $r$ are distinct odd primes.  Further, assuming  the explicit factors of $\Phi_r(x)$ are known for  arbitrary odd integer $r>1$, Tuxanidy and Wang  \cite{tw} obtained the irreducible factors of $\Phi_{2^nr}(x)$ over $\mathbb{F}_q$. Singh and Batra  \cite{singh} revisited the factorization of $\Phi_{2^n}(x)$ and  introduced a recurrence relation to obtain  the coefficients of all irreducible factors  of $\Phi_{2^n}(x)$ over $\mathbb{F}_q$ when $q\equiv3\pmod 4$.

In 1993, Blake et al. \cite{bla} explicitly determined all the irreducible factors of  $x^{2^n}\pm1$ over $\mathbb{F}_p$, where $p$ is a prime with $p\equiv3\pmod 4$. Chen et al.  \cite{chen} obtained the explicit factorization of $x^{2^n p^l}-1$ over $\mathbb{F}_q$, where integer $l\ge 1$ and $p$ is an odd prime with $p|(q-1)$. In 2015, Mart\'{i}nez et al. \cite{bro}  generalized the results discussed in \cite{chen} by considering the explicit factorization of $x^{2^nd}-1$  over $\mathbb{F}_q$ when $d=p_1^{l_1}p_2^{l_2}\cdots p_r^{l_r}$, where integer $l_i\ge1$ and  $p_i$ is an  odd prime such that $p_i|(q-1)$ for each $i=1,2,\cdots, r$. It is natural to ask what happens for the case of factorizing $x^{2^nd}-1$ over a finite field $\mathbb{F}_q$ when $d$ divides $q+1$.

 In this paper, the factorization of $x^{2^nd}-1$ into the product of irreducible factors over $\mathbb{F}_q$ is obtained when $d$ is an odd divisor of $q+1$. This factorization  gives exact   information regarding the type of irreducible factors, the number of irreducible factors and  their degrees.   

 The  paper is organized as follows:  In Sect. 2, we introduce some basic results concerning cyclotomic polynomials and irreducibility of  decomposable polynomials over finite fields. This section also contains two recursive sequences  which are useful to obtain the coefficients of irreducible factors of $x^{2^nd}-1$ over $\mathbb{F}_q$. In Sect. 3, we find  irreducible factors of  the decomposable cyclotomic polynomials of the type $\Phi_{2^n}(x^d)$ over $\mathbb{F}_q$ from irreducible factors of some decomposable cyclotomic  polynomials of smaller orders.  In Sect. 4, we give the explicit factorization of $x^{2^nd}-1$ over $\mathbb{F}_q$ via factoring the cyclotomic polynomial $\Phi_{2^n}(x^d)$ into the product of irreducible factors over $\mathbb{F}_q$ in two different cases $q\equiv1\pmod{4}$ and $q\equiv3\pmod 4$.  In addition, two numerical examples are given to validate the results.  
\section{Preliminaries and auxiliary results}
The paper follows the standard notation of finite fields and the multiplicative group of a finite field $\mathbb{F}_q$, denoted  by $\mathbb{F}_q^*$. The following classical result presents necessary and sufficient conditions to verify the irreducibility of decomposable polynomials of the form $f(x^k)$ over finite fields.

\begin{lem}\cite[Theorem 3.35]{lidl} \label{tirr}Let $f_1(x), f_2(x),\cdots, f_N(x)$ be all distinct monic irreducible polynomials over $\mathbb{F}_q$ of degree $l$ and order $e$, and let $t\ge2$ be an integer whose prime factors divide $e$ but not $(q^l-1)/e$. Also assume that $4|(q^l-1)$ if $4|t$. Then $f_1(x^t),f_2(x^t),\cdots, f_N(x^t)$ are all distinct monic irreducible polynomials over $\mathbb{F}_q$ of degree $lt$ and order $et$. \end{lem}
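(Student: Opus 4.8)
\emph{Proof proposal.} The plan is to reinterpret the family $\{f_i(x^t)\}_{i=1}^{N}$ as the complete list of monic irreducible factors of the cyclotomic polynomial $\Phi_{et}(x)$ over $\mathbb{F}_q$. We may assume $N\ge 1$; then, since any monic irreducible polynomial over $\mathbb{F}_q$ of order $e$ has degree $\operatorname{ord}_e(q)$, necessarily $l=\operatorname{ord}_e(q)$ and $N=\phi(e)/l$. From $e\mid q^l-1$ we get $\gcd(e,q)=1$, and from the fact that every prime factor of $t$ divides $e$ we get $\gcd(t,q)=1$; so $\Phi_{et}(x)\in\mathbb{F}_q[x]$ is well defined. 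Since the product of all monic irreducible polynomials of order $e$ over $\mathbb{F}_q$ is $\Phi_e(x)$, we have $\prod_{i=1}^{N}f_i(x)=\Phi_e(x)$, hence $\prod_{i=1}^{N}f_i(x^t)=\Phi_e(x^t)$.

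\textbf{Step 1 (a cyclotomic identity).} I would first establish $\Phi_e(x^t)=\Phi_{et}(x)$. Because every prime dividing $t$ divides $e$, the radicals of $e$ and $et$ coincide; using the M\"obius product $\Phi_n(x)=\prod_{k\mid n}(x^{n/k}-1)^{\mu(k)}$ recalled in the introduction and matching the two products term by term (only squarefree $k$ contribute, and these are the same for $e$ and $et$), or equivalently comparing root sets (a primitive $et$-th root of unity raised to the $t$ has order exactly $e$, and conversely), gives the identity. Combined with the previous paragraph, $\prod_{i=1}^{N}f_i(x^t)=\Phi_{et}(x)$.

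\textbf{Step 2 (the order of $q$ modulo $et$).} The crux is $\operatorname{ord}_{et}(q)=lt$. It suffices to treat $t=p$ prime and iterate over the prime-power factors of $t$, at each stage replacing $(e,l)$ by $(ep,lp)$. Writing $k=(q^l-1)/e$, the hypothesis $p\nmid k$ gives $v_p(q^l-1)=v_p(e)$ (here $v_p$ denotes $p$-adic valuation). Expanding $q^{lp}=(1+ke)^p$ and using $p\mid e$ shows $q^{lp}\equiv 1\pmod{ep}$, so $\operatorname{ord}_{ep}(q)\mid lp$; it is also a multiple of $\operatorname{ord}_e(q)=l$, hence equals $l$ or $lp$; and it cannot be $l$, since $q^l\equiv 1\pmod{ep}$ would force $ep\mid ke$, i.e. $p\mid k$. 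Thus $\operatorname{ord}_{ep}(q)=lp$. To iterate, one checks the hypotheses persist, i.e. $p\nmid(q^{lp^{j}}-1)/(ep^{j})$, which follows from $v_p(q^{lp^{j}}-1)=v_p(q^l-1)+j=v_p(ep^{j})$; this valuation identity holds by lifting the exponent when $p$ is odd, and when $p=2$ precisely under the standing assumption $4\mid q^l-1$ (which is exactly what is needed as soon as $t$ carries more than one factor of $2$). Assembling the prime-power cases via the Chinese Remainder Theorem yields $\operatorname{ord}_{et}(q)=lt$.

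\textbf{Step 3 (conclusion, and the obstacle).} By Step 2 and the splitting statement recalled in the introduction, $\Phi_{et}(x)$ factors over $\mathbb{F}_q$ into exactly $\phi(et)/(lt)$ monic irreducible polynomials, each of degree $lt$ and order $et$; and since $e$ and $et$ have the same radical, $\phi(et)=t\,\phi(e)=tlN$, so that number is $N$. By Step 1, $\Phi_{et}(x)=\prod_{i=1}^{N}f_i(x^t)$ is also a product of $N$ monic polynomials of degree $lt$ each; hence each irreducible factor of $\Phi_{et}(x)$, having degree $lt$ and dividing some $f_i(x^t)$ of the same degree, equals that $f_i(x^t)$. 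Therefore every $f_i(x^t)$ is irreducible of degree $lt$ and order $et$, distinct $f_i$ give distinct $f_i(x^t)$, and a count shows these are all such polynomials. (Alternatively, Step 2 can be bypassed by invoking the classical irreducibility criterion for binomials: $x^t-\beta$ is irreducible over $\mathbb{F}_{q^l}$ exactly under the stated hypotheses on $t$, $e=\operatorname{ord}(\beta)$ and $q^l-1$, forcing $[\mathbb{F}_q(\alpha):\mathbb{F}_q]=lt$ for a root $\alpha$ of $f_i(x^t)$.) The genuinely delicate point is the prime $p=2$ in Step 2: without $4\mid q^l-1$ the relevant $2$-adic valuation identity fails and the statement is actually false (for instance $x^4+1$ splits into two quadratics over $\mathbb{F}_3$), so this is precisely where that assumption enters; everything else is bookkeeping with cyclotomic polynomials and degrees.
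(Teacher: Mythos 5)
The paper offers no proof of this lemma: it is quoted as Theorem 3.35 of Lidl and Niederreiter and used as a black box throughout, so there is no internal argument to compare against. Your proof is correct and self-contained, and it takes a somewhat different route from the standard textbook one. Lidl--Niederreiter deduce the result from the irreducibility criterion for binomials $x^t-\beta$ over $\mathbb{F}_{q^l}$, which you mention only parenthetically at the end of Step 3; your main line instead combines the identity $\Phi_e(x^t)=\Phi_{et}(x)$ (valid because every prime factor of $t$ divides $e$), the order computation $\operatorname{ord}_{et}(q)=lt$, and a degree count on the $\phi(et)/(lt)=N$ irreducible factors of $\Phi_{et}(x)$. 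That has the virtue of resting only on the two facts about cyclotomic polynomials already recalled in the paper's introduction. Step 2 is the genuine content and you handle it correctly, including where the hypothesis $4\mid(q^l-1)$ enters: it is exactly the condition $v_2(q^l+1)=1$ needed to keep the $2$-adic valuation growing by one per doubling, and it is only needed once $t$ carries a second factor of $2$; your counterexample $x^4+1$ over $\mathbb{F}_3$ is the canonical one. Two points you wave at rather than write out, neither of which is a gap: the passage from the prime-power cases to general $t$ rests on $\operatorname{ord}_{\operatorname{lcm}(m_1,m_2)}(q)=\operatorname{lcm}\bigl(\operatorname{ord}_{m_1}(q),\operatorname{ord}_{m_2}(q)\bigr)$ together with $et=\operatorname{lcm}_i(e\,p_i^{a_i})$, and the identification $\prod_i f_i(x)=\Phi_e(x)$ uses that a monic irreducible polynomial other than $x$ has order equal to the common multiplicative order of its roots, so the polynomials of order $e$ are precisely the minimal polynomials of the primitive $e$th roots of unity.
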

 
\begin{lem}\cite[Exercise 2.57]{lidl}\label{cyclotomic} Suppose that  $p$ is an odd prime such that $\gcd(2p,q)=1$. Then, in $\mathbb{F}_q[x]$, the following properties of cyclotomic polynomials hold:\begin{enumerate}
\item[(i)] $\Phi_{2n}=\Phi_n(-x)$ if $n\ge3$ and $n$ is odd,
\item[(ii)] $\Phi_{np^k}(x)=\Phi_{np}(x^{p^{k-1}})$ for any positive integers $n,k$
\item[(iii)] $\displaystyle{\Phi_{np}(x)=\Phi_{n}(x^{p})}$ if $p$ divides $n$. In particular,  $\displaystyle{\Phi_{2^{k+r}}(x)=\Phi_{2^k}(x^{2^r})}$ for integers $k\ge1$ and $r\ge0$.
\item[(iv)] $\displaystyle{\Phi_{np}(x)=\Phi_{n}(x^{p})/\Phi_n(x)}$ if $p\nmid n$.
 \end{enumerate} \end{lem}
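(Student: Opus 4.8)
The plan is to obtain all four identities directly from the Möbius-inversion formula $\Phi_m(x)=\prod_{d\mid m}\bigl(x^{m/d}-1\bigr)^{\mu(d)}$ recorded in the Introduction, which is a valid identity in $\mathbb{F}_q[x]$ whenever the order $m$ is coprime to $q$; this coprimality holds for every order that appears below, since $\gcd(2p,q)=1$ and $n$ is (tacitly) assumed prime to $q$. Beyond this formula the only ingredients needed are the multiplicativity of $\mu$, the vanishing of $\mu$ on non-squarefree integers, and a description of the divisor set of $mp$ or $2n$; consequently I expect the argument to be pure bookkeeping with a single genuinely delicate point, noted at the end.

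First I would establish (iii). If $p\mid n$ then $p$ is already among the prime divisors of $n$, so $n$ and $np$ have the same squarefree divisors, i.e. the same set of $d$ with $\mu(d)\ne0$. Hence $\Phi_{np}(x)=\prod_{d\mid n}\bigl(x^{np/d}-1\bigr)^{\mu(d)}=\prod_{d\mid n}\bigl((x^{p})^{n/d}-1\bigr)^{\mu(d)}=\Phi_n(x^{p})$. The stated special case $\Phi_{2^{k+r}}(x)=\Phi_{2^{k}}(x^{2^{r}})$ is not formally an instance of this (the hypothesis asks for odd $p$), but it is immediate from $\Phi_{2^{m}}(x)=x^{2^{m-1}}+1$, or from repeating the same Möbius argument with $p=2$. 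Part (ii) then follows by induction on $k$: the case $k=1$ is trivial, and for $k\ge2$ we have $p\mid np^{k-1}$, so (iii) gives $\Phi_{np^{k}}(x)=\Phi_{np^{k-1}}(x^{p})$; feeding this into the inductive hypothesis $\Phi_{np^{k-1}}(y)=\Phi_{np}\bigl(y^{p^{k-2}}\bigr)$ yields $\Phi_{np^{k}}(x)=\Phi_{np}\bigl(x^{p^{k-1}}\bigr)$.

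Next I would treat (iv) and (i) together, as the same computation. If $p\nmid n$ the divisors of $np$ are precisely $\{\,d:d\mid n\,\}\sqcup\{\,pd:d\mid n\,\}$, and $\mu(pd)=-\mu(d)$ for $d\mid n$; splitting the Möbius product over these two families gives $\Phi_{np}(x)=\prod_{d\mid n}\bigl(x^{np/d}-1\bigr)^{\mu(d)}\cdot\prod_{d\mid n}\bigl(x^{n/d}-1\bigr)^{-\mu(d)}=\Phi_n(x^{p})/\Phi_n(x)$ (the left product is $\Phi_n(x^{p})$, the right one is $\Phi_n(x)^{-1}$), and the division is exact since the left-hand side is a polynomial. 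For (i) I would run the same split with $p=2$: since $n$ is odd the divisors of $2n$ are again $\{\,d:d\mid n\,\}\sqcup\{\,2d:d\mid n\,\}$ with $\mu(2d)=-\mu(d)$, and using $x^{2n/d}-1=(x^{n/d}-1)(x^{n/d}+1)$ one gets $\Phi_{2n}(x)=\prod_{d\mid n}(x^{n/d}+1)^{\mu(d)}$. Since every $n/d$ is odd, $(-x)^{n/d}-1=-(x^{n/d}+1)$, hence $\Phi_n(-x)=\prod_{d\mid n}\bigl(-(x^{n/d}+1)\bigr)^{\mu(d)}=(-1)^{\sum_{d\mid n}\mu(d)}\,\Phi_{2n}(x)$.

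The delicate point — the only place where the hypothesis $n\ge3$ is actually used — is the evaluation of that sign: $\sum_{d\mid n}\mu(d)=0$ precisely because $n>1$, so the sign is $+1$ and $\Phi_n(-x)=\Phi_{2n}(x)$; for $n=1$ the sign would be $-1$, consistently with $\Phi_1(-x)=-x-1\ne x+1=\Phi_2(x)$. Everything else is routine divisor bookkeeping. I would also note an alternative, equally short route via roots of unity in $\overline{\mathbb{F}_q}$: for (i), if $\zeta$ has order $n$ (odd) then $-\zeta$ has order $2n$, and $\zeta\mapsto-\zeta$ is a bijection from primitive $n$th roots onto primitive $2n$th roots, whence $\Phi_{2n}(x)=\prod_{\zeta}(x+\zeta)=(-1)^{\phi(n)}\Phi_n(-x)$ with $\phi(n)$ even for $n\ge3$; but the Möbius-product approach dispatches all four parts in a single idiom.
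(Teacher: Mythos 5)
Your proof is correct. Note that the paper itself gives no proof of this lemma at all: it is quoted verbatim as a known result (Exercise 2.57 of Lidl--Niederreiter), so there is nothing to compare against. Your derivation of all four identities from the single M\"obius product $\Phi_m(x)=\prod_{d\mid m}(x^{m/d}-1)^{\mu(d)}$ (valid in $\mathbb{F}_q(x)$ whenever $\gcd(m,q)=1$) is the standard route and is carried out accurately: the divisor-set bookkeeping in (iii) and (iv), the induction reducing (ii) to (iii), and the sign analysis in (i) via $\sum_{d\mid n}\mu(d)=0$ for $n>1$ are all sound, and you correctly isolate the only place where $n\ge 3$ (equivalently $n>1$ for odd $n$) is needed. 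The one point worth making explicit if this were written up in full is that the M\"obius product a priori lives in the field of rational functions because of the negative exponents, so each identity should be read there and then observed to be an identity of polynomials; you do flag this for (iv), and the same remark covers the other parts.
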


For a fixed integer $l\ge1$,  a reduced residue system modulo $l$  can be found by deleting   from $1, 2, \cdots, l$, a complete residue system modulo $l$, those integers that are not co-prime to $l$. Note that all reduced residue system modulo $l$ contains $\phi(l)$ integers. 
 \begin{lem}\label{res}\cite[Lemma 7.6]{koshy} Let $l$ be a positive integer and $a$ be any integer with $\gcd(a,l)=1$. Let $r_1, r_2, \cdots, r_{\phi(l)}$ be the positive integers $\le l$ and relatively prime to $l$. Then the least residues of the integers $ar_1,ar_2,\cdots, ar_{\phi(l)}$ modulo $k$ are a permutation of the integers  $r_1, r_2, \cdots, r_{\phi(l)}$.\end{lem}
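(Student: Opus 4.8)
The plan is to realize the reduction map as an injection of the set of reduced residues into itself and then invoke finiteness. Write $S = \{r_1, r_2, \dots, r_{\phi(l)}\}$ and let $\sigma \colon S \to \{0,1,\dots,l-1\}$ send $r_i$ to the least nonnegative residue of $a r_i$ modulo $l$. (If $l = 1$ there is nothing to prove, so assume $l \ge 2$.)

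First I would show that $\sigma(S) \subseteq S$. Fix $i$ and set $s_i = \sigma(r_i)$, so that $s_i \equiv a r_i \pmod{l}$ with $0 \le s_i < l$. From $\gcd(a,l) = 1$ and $\gcd(r_i, l) = 1$ we get $\gcd(a r_i, l) = 1$, and since $s_i$ and $a r_i$ differ by a multiple of $l$, the invariance of $\gcd(\cdot, l)$ under translation by multiples of $l$ gives $\gcd(s_i, l) = 1$; in particular $s_i \ne 0$ because $l \ge 2$. Hence $1 \le s_i \le l$ with $\gcd(s_i, l) = 1$, i.e.\ $s_i \in S$.

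Next I would prove that $\sigma$ is injective. If $\sigma(r_i) = \sigma(r_j)$, then $a r_i \equiv a r_j \pmod{l}$; multiplying by an inverse of $a$ modulo $l$ (which exists since $\gcd(a,l)=1$) yields $r_i \equiv r_j \pmod l$, and as both lie in $\{1, \dots, l\}$ this forces $r_i = r_j$. Thus $\sigma$ is an injection from the finite set $S$ into itself, hence a bijection, which is exactly the assertion that $a r_1, \dots, a r_{\phi(l)}$ reduce modulo $l$ to a permutation of $r_1, \dots, r_{\phi(l)}$.

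I do not expect any genuine obstacle here: the only step demanding a moment's attention is the preservation of coprimality to $l$ under reduction, and that is immediate from the elementary fact that $\gcd(m, l) = \gcd(m + kl, l)$ for all integers $k$. Everything else is the pigeonhole principle applied to an injection of a finite set to itself.
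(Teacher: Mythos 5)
Your proof is correct and is the standard argument: coprimality to $l$ is preserved under reduction modulo $l$, the map $r_i \mapsto ar_i \bmod l$ is injective because $a$ is invertible modulo $l$, and an injection of a finite set into itself is a bijection. The paper supplies no proof of this lemma --- it is quoted directly from Koshy's textbook (Lemma 7.6) --- so there is nothing to compare against; the only remark worth making is that the statement's ``modulo $k$'' is a typo for ``modulo $l$,'' which you have implicitly and correctly repaired.
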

 
We need the following  conventions for further developments. 
For each integer $l\ge1$,  $\nu_2(l)$  denotes the maximum power of $2$ dividing $l$. For any odd prime power $q$, let $s=\nu_2(q-1)\ge1$ and $m=\nu_2(q^2-1)$, then  $m-s=\nu_2(q+1)\ge1$. Readily note that  (i) $m=s+1$ if and only if $q\equiv1\pmod 4$ and (ii) $s=1$ if and only if $q\equiv3\pmod 4$.  

\begin{lem} \label{cyclo2n} Let $q$ and $d$ be odd integer  such that $\gcd(q,d)=1$. If  $\beta_{2^k}$ is a primitive $2^k$th root of unity in $\mathbb{F}_{q^2}^*$. Then, for $2\le k\le m-1$, the complete factorization of cyclotomic polynomial $\Phi_{2^k}(x)$ over $\mathbb{F}_{q^2}$ is given by:  \begin{eqnarray} \label{beta}
\Phi_{2^k}(x)=
\displaystyle{\prod_{i=1}^{2^{k-2}}(x\pm\beta_{2^k}^{d(2i-1)})}.\end{eqnarray}   \end{lem}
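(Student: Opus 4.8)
The plan is to show that $\Phi_{2^k}(x)$ splits completely into linear factors over $\mathbb{F}_{q^2}$ and that the $2^{k-1}=\phi(2^k)$ elements $\pm\beta_{2^k}^{d(2i-1)}$ with $1\le i\le 2^{k-2}$ run exactly once through the primitive $2^k$th roots of unity. First I would record the ambient facts: since $q$ is odd we have $8\mid q^2-1$, so $m=\nu_2(q^2-1)\ge 3$ and the range $2\le k\le m-1$ is non-empty; and for such $k$ one has $2^k\mid q^2-1$, so the cyclic group $\mathbb{F}_{q^2}^*$ contains an element $\beta_{2^k}$ of order $2^k$ and all $2^k$th roots of unity lie in $\mathbb{F}_{q^2}$. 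Consequently $\Phi_{2^k}(x)$ factors over $\mathbb{F}_{q^2}$ into $2^{k-1}$ distinct monic linear polynomials, its roots being precisely the powers $\beta_{2^k}^{j}$ with $j$ odd and $1\le j\le 2^k$.

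Next I would convert the $\pm$ signs into an exponent shift. Since $k\ge 2$ we have $\beta_{2^k}^{2^{k-1}}=-1$, hence $-\beta_{2^k}^{d(2i-1)}=\beta_{2^k}^{d(2i-1)+2^{k-1}}$; and because $d$ is odd, $d\cdot 2^{k-1}\equiv 2^{k-1}\pmod{2^k}$, so $d(2i-1)+2^{k-1}\equiv d\bigl(2i-1+2^{k-1}\bigr)\pmod{2^k}$. Therefore the exponents attached to the proposed roots are exactly $\{\,dr \bmod 2^k : r\in R\,\}$, where $R=\{1,3,\dots,2^{k-1}-1\}\cup\{2^{k-1}+1,\dots,2^k-1\}$ is the reduced residue system modulo $2^k$ consisting of the odd integers below $2^k$. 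Since $\gcd(d,2^k)=1$, Lemma \ref{res} says that $r\mapsto dr\bmod 2^k$ permutes this reduced residue system, so the $2^{k-1}$ exponents $d(2i-1)$ and $d(2i-1)+2^{k-1}$, reduced mod $2^k$, together exhaust the odd residues mod $2^k$ without repetition. Hence the $2^{k-1}$ elements $\pm\beta_{2^k}^{d(2i-1)}$ are pairwise distinct and are exactly the primitive $2^k$th roots of unity, and comparing with the split form of $\Phi_{2^k}(x)$ yields \eqref{beta}.

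The only delicate point is this bookkeeping: turning the $\pm$ into a shift by $2^{k-1}$, absorbing it into the factor $d$, and recognizing the outcome as a full reduced residue system modulo $2^k$. If one wishes to avoid Lemma \ref{res}, the same conclusion follows by checking the $2^{k-1}$ exponents are distinct mod $2^k$ directly: cancelling the unit $d$ kills collisions inside each of the families $\{d(2i-1)\}$ and $\{d(2i-1)+2^{k-1}\}$ (they force $i\equiv j\pmod{2^{k-1}}$, hence $i=j$ as $1\le i,j\le 2^{k-2}$), while a cross-collision would give $d(i-j)\equiv 2^{k-2}\pmod{2^{k-1}}$, which is impossible since the left side has $2$-adic valuation $\nu_2(i-j)$ with $0\le|i-j|<2^{k-2}$, whereas the right side has valuation $k-2$. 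Either way the count $2^{k-1}=\phi(2^k)$ closes the argument; the case $k=2$ reduces to the direct identity $(x-\beta_4^{d})(x+\beta_4^{d})=x^2-\beta_4^{2d}=x^2+1=\Phi_4(x)$, using that $d$ is odd.
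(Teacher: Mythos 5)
Your proof is correct and follows essentially the same route as the paper: both arguments combine Lemma \ref{res} (multiplication by the odd unit $d$ permutes the odd residues modulo $2^k$) with the identity $\beta_{2^k}^{2^{k-1}}=-1$ to match the $2^{k-1}$ elements $\pm\beta_{2^k}^{d(2i-1)}$ with the primitive $2^k$th roots of unity; you merely unfold the $\pm$ form into the full exponent set where the paper folds the full product into the $\pm$ form. Your optional direct valuation check of distinctness is a fine self-contained substitute for Lemma \ref{res}, but it does not change the substance of the argument.
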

\begin{proof}  Let $d$ be an odd integer and  integer $k\ge2$. For a fixed $k$, from Lemma \ref{res}, the least reduced residue system of the integers  $d,3d,\cdots, (2^{k}-1)d$ modulo $2^k$ is a permutation of the integers $1,3,\cdots, 2^{k}-1$. Therefore,  a collection of elements $\beta_{2^k}^{d}, \beta_{2^k}^{3d},\cdots, \beta_{2^k}^{d(2^{k}-1)}$  is  a  rearrangement of  the elements $\beta_{2^k}, \beta_{2^k}^{3}, \cdots, \beta_{2^k}^{2^{k}-1}$  in some order in $\mathbb{F}_{q^2}^*$ with $\beta_{2^k}^{d(2i-1)}\neq\beta_{2^k}^{d(2j-1)}$ for $1\le i<j\le 2^{k-1}$. For any  $2\le k\le m-1$ and $q$ is odd, we get $2^k|(q^2-1)$. Since $\beta_{2^k}^{d(2i-1)}$ for $1\le i\le 2^{k-1}$ are $2^{k-1}$ distinct elements in $\mathbb{F}_{q^2}^*$, so we find that \begin{eqnarray*}\label{full}\Phi_{2^k}(x)=\prod_{i=1}^{2^{k-1}}(x-\beta_{2^k}^{d(2i-1)}).\end{eqnarray*} Using the fact\begin{center}$
\beta_{2^k}^{d(2(2^{k-2}+i)-1)}=
\beta_{2^k}^{d(2^{k-1})+d(2i-1)}=-\beta_{2^k}^{d(2i-1)}$ for $1\le i\le 2^{k-2}$, \end{center} the factorization of $\Phi_{2^k}(x)$ can be viewed as follow: \begin{eqnarray*}\Phi_{2^k}(x)&=&\prod_{i=1}^{2^{k-2}}(x-\beta_{2^k}^{d(2i-1)})\prod_{i=2^{k-2}+1}^{2^{k-1}}(x-\beta_{2^k}^{d(2i-1)})\\&=&\prod_{i=1}^{2^{k-2}}(x\pm\beta_{2^k}^{d(2i-1)}).\end{eqnarray*} \end{proof}
The following remark to Lemma \ref{cyclo2n} is easily deduced.
\begin{rem}\label{rephi} If  $q\equiv1\pmod 4$ and $\alpha_{2^k}$ is a primitive $2^k$th root of unity in $\mathbb{F}_q^*$. Then  $\beta_{2^k}=\alpha_{2^k}$ for $2\le k\le m-1=s$.  By Lemma \ref{cyclo2n} it follows that \begin{eqnarray} \label{fact2s}
\Phi_{2^k}(x)=
\displaystyle{\prod_{i=1}^{2^{k-2}}(x\pm\alpha_{2^k}^{d(2i-1)})}.\end{eqnarray}  
Also if  $q\equiv3\pmod 4$, then  $2^k\nmid(q-1)$, but $2^k|(q+1)$ for $2\le k\le m-1$. In this case  $\Phi_{2^k}(x)$ is the product of  $\phi(2^k)/2$ trinomails of degree $2$ over $\mathbb{F}_q$ (see \cite[Lemma 2.6]{singh}).\end{rem}
 The following three lemmas will be useful  in the proof of Theorem \ref{phi4k+1} and Theorem \ref{phi2nd3}.
 \begin{lem}\label{factm}For any positive  integer $\ell$ relatively prime with $q$, let  $c\in\mathbb{F}_q^*$ such that $c=a^{\ell}$ for some $a\in \mathbb{F}_{q}$. Then  \begin{equation}\label{xd-c}\displaystyle{x^{\ell}-c=\prod_{j=0}^{\ell-1}(x-ab^j)},\end{equation} where $b$ is a primitive $\ell$th root of unity in some extension field  of $\mathbb{F}_q$. 
Further, if $\ell$ is an odd divisor of $q+1$,  the explicit factorization of $x^\ell-c$ over $\mathbb{F}_q$ is given by \begin{equation}\label{xd-c2}x^{\ell}-c=(x-a)\prod_{j=1}^{(\ell-1)/2}(x^2-a(b^j+b^{qj})x+a^2).\end{equation} \end{lem}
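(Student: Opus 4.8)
The plan is to prove the two displayed identities in turn: \eqref{xd-c} by passing to a splitting field, and \eqref{xd-c2} by grouping the linear factors of \eqref{xd-c} into Frobenius-conjugate pairs.

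For \eqref{xd-c}, first note that $\gcd(\ell,q)=1$ forces $x^\ell-c$ to be separable, since $\gcd(x^\ell-c,\,\ell x^{\ell-1})=1$ (using $c\neq 0$); hence it has $\ell$ distinct roots in its splitting field. If $b$ is a primitive $\ell$th root of unity over $\mathbb{F}_q$, then for each $0\le j\le\ell-1$ we have $(ab^j)^\ell=a^\ell b^{\ell j}=c$, and because $a\neq 0$ the elements $a,ab,\dots,ab^{\ell-1}$ are pairwise distinct. They are therefore exactly the $\ell$ roots of $x^\ell-c$, which gives \eqref{xd-c}.

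Now suppose $\ell$ is an odd divisor of $q+1$, so that $q\equiv-1\pmod\ell$ and $\gcd(\ell,q)=1$ automatically. Since $\ell\mid q+1\mid q^2-1$, the root of unity $b$ lies in $\mathbb{F}_{q^2}$, and $b^q=b^{-1}$, whence $b^{qj}=b^{-j}=b^{\ell-j}$ for every $j$. The idea is to split off the root $ab^0=a\in\mathbb{F}_q$ and pair the remaining linear factors of \eqref{xd-c} as $(x-ab^j)(x-ab^{\ell-j})$ for $j=1,\dots,(\ell-1)/2$; because $\ell$ is odd, these $(\ell-1)/2$ pairs exhaust the indices $\{1,\dots,\ell-1\}$ without repetition. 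Expanding one such pair and using $b^\ell=1$ together with $b^{\ell-j}=b^{qj}$ gives
\[
(x-ab^j)(x-ab^{\ell-j})=x^2-a\bigl(b^j+b^{qj}\bigr)x+a^2 ,
\]
and taking the product over $j$ produces \eqref{xd-c2} as an identity in $\mathbb{F}_{q^2}[x]$. To see that it is in fact an identity over $\mathbb{F}_q$, observe that $a^2\in\mathbb{F}_q$ and that $b^j+b^{qj}=b^j+(b^j)^q$ is the trace of $b^j$ from $\mathbb{F}_{q^2}$ down to $\mathbb{F}_q$, hence lies in $\mathbb{F}_q$; since $a\in\mathbb{F}_q$, every quadratic factor has coefficients in $\mathbb{F}_q$.

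If one additionally wants \eqref{xd-c2} to be a factorization into irreducible polynomials, I would add the remark that $b^j\notin\mathbb{F}_q$ for $1\le j\le(\ell-1)/2$: the order of $b^j$ is an odd divisor of $q+1$ exceeding $1$, so it cannot divide $q-1$, and therefore $ab^j\notin\mathbb{F}_q$, which forces the quadratic with roots $ab^j,ab^{-j}$ to be irreducible over $\mathbb{F}_q$; distinct values of $j$ give quadratics with disjoint root sets, so the factors are pairwise distinct. None of this is technically hard; the only points needing care are the bookkeeping that the pairing $j\leftrightarrow\ell-j$ uses each nonzero residue modulo $\ell$ exactly once (which uses that $\ell$ is odd) and the identity $b^{qj}=b^{\ell-j}$, which is exactly where the hypothesis $\ell\mid q+1$ is used.
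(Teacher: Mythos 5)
Your proposal is correct and follows essentially the same route as the paper: establish \eqref{xd-c} by exhibiting the $\ell$ distinct roots $ab^j$ in a splitting field, then obtain \eqref{xd-c2} by pairing $x-ab^j$ with $x-ab^{\ell-j}$ using $b^q=b^{-1}$ (a consequence of $\ell\mid q+1$) and checking via the Frobenius/trace that the quadratic coefficients lie in $\mathbb{F}_q$, with the observation $b^j\notin\mathbb{F}_q$ supplying irreducibility. Your justifications of the minor steps (separability via the derivative, and the order argument for $b^j\notin\mathbb{F}_q$) are if anything slightly cleaner than the paper's, but the decomposition and key ideas are identical.
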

\begin{proof}  Since $\gcd(\ell,q)=1$, so there exists a least positive integer $l$ such that $\ell|(q^l-1)$. Let $b$ be a primitive $\ell$th root of unity in $\mathbb{F}_{q^l}$. Let $c\in\mathbb{F}_q^*$ such that $c=a^{\ell}$ for some $a\in\mathbb{F}_q^*$. Note that  $a,ab,\cdots, ab^{\ell-1}$ are $\ell$ distinct elements in $\mathbb{F}_{q^l}^*$ such that $(ab^j)^{\ell}=a^{\ell}(b^{\ell})^j=a^{\ell}=c$ for all $0\le j\le \ell-1$. It follows that  $x-ab^j$ is a  monic factor of   $x^\ell-c$  for every $0\le j\le \ell-1$. Therefore the polynomial $x^\ell-c$ is given by the product of polynomials $x-a, x-ab,\cdots, x-ab^{\ell-1}$ over $\mathbb{F}_{q^l}$.

 Further, let $\ell$ be an odd divisor of $q+1$ with $\gcd(q,\ell)=1$. Clearly $\ell\nmid(q-1)$. Then $b^j\in \mathbb{F}_{q^2}^*$ for all $0\le j\le \ell-1$. For any $1\le j\le \ell-1$, we note that $b^j\notin\mathbb{F}_q$. On contrary, assume that $b^j\in\mathbb{F}_q$ for some $1\le j\le \ell-1$, then $qj\equiv j\pmod \ell$. Since $\ell\nmid(q-1)$, so $j\equiv0\pmod \ell$, which is not possible because   $1\le j\le \ell-1$. This proves that $b^j\notin\mathbb{F}_q$ for all $1\le j\le \ell-1$. 

Further, from Expression \ref{xd-c}, it follows that \begin{eqnarray*}x^{\ell}-c&=&(x-a)\prod_{j=1}^{(\ell-1)/2}(x-ab^j)\prod_{j=(\ell+1)/2}^{\ell-1}(x-ab^{j})\\&=&(x-1)\prod_{j=1}^{(\ell-1)/2}(x-ab^j)\prod_{j=1}^{(\ell-1)/2}(x-ab^{\ell-j}).\end{eqnarray*}  Also, for all $1\le j\le (\ell-1)/2$,  since $b^{qj}=b^{-j}=b^{\ell-j}$ and $a(b^j+b^{qj})\in\mathbb{F}_q$ as $(a(b^j+b^{qj}))^q=a^q(b^{qj}+b^{q^2j})=a(b^{qj}+b^j)$, so that the minimal polynomial of $b^j$  over $\mathbb{F}_q$ is $(x-ab^j)(x-ab^{qj})=x^{2}-a(b^j+b^{qj})x+a^2$. Therefore,   \begin{center}$\displaystyle{x^{\ell}-c=
(x-a)\prod_{j=1}^{(\ell-1)/2}(x^2-a(b^j+b^{qj})x+a^2)}$. \end{center} This completes the proof.
 \end{proof}

 In the following results we introduce two recursive sequences which are to be useful later on  in describing the coefficients of irreducible factors of $\Phi_{2^k}(x^d)$ over $\mathbb{F}_q$.
 \begin{lem} \label{gammad} Let $d$ be an odd divisor of $q+1$ and  $\gamma$ be a primitive $d$th root of unity in  $\mathbb{F}_{q^2}$. Let $\delta_{j}=\gamma^j+\gamma^{qj}$ and $\theta_{j}=\gamma^j-\gamma^{qj}$ for $0\le j\le d-1$. Then the recursive sequences $(\delta_j)_{j\ge0}$ of length $(d+1)/2$ in $\mathbb{F}_q$ and $(\theta_j)_{j\ge0}$ of length $d$in $\mathbb{F}_{q^2}$  are given by:   \begin{center}$\delta_j=\delta_1\delta_{j-1}-\delta_{j-2}$ with  $\delta_0=2$ and $\delta_1=\gamma+\gamma^{-1}$ and \end{center} 
\begin{center}$\theta_j=\delta_1\theta_{j-1}-\theta_{j-2}$ with   $\theta_0=0$ and $\theta_1=\gamma-\gamma^{-1}$.\end{center} satisfying  $\delta_{d-j}=d_j$, $\theta_{d-j}=-\theta_j$ and $\delta_j^2-2=\theta_j^2+2=\delta_{2j}$  for $0\le j\le (d-1)/2$.  
 \end{lem}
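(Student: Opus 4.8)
The plan is to reduce everything to one structural fact: since $d$ is an odd divisor of $q+1$ we have $q\equiv-1\pmod d$, so the Frobenius acts on the order-$d$ element $\gamma$ by inversion, $\gamma^q=\gamma^{-1}$, and consequently $\gamma^{q^2}=\gamma$. With this in hand $\delta_j=\gamma^j+\gamma^{-j}$ and $\theta_j=\gamma^j-\gamma^{-j}$ for every $j$, and all of the assertions become elementary manipulations with powers of $\gamma$, together with a short induction to identify the sequences produced by the stated recurrences.

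First I would record the membership and symmetry claims. From $\delta_j^q=\gamma^{qj}+\gamma^{q^2j}=\gamma^{-j}+\gamma^{j}=\delta_j$ we get $\delta_j\in\mathbb{F}_q$, and $\theta_j$ lies in $\mathbb{F}_{q^2}$ since $\gamma\in\mathbb{F}_{q^2}$ (indeed $\theta_j^q=\gamma^{-j}-\gamma^{j}=-\theta_j$). Using $\gamma^d=1$ one reads off $\delta_{d-j}=\gamma^{-j}+\gamma^{j}=\delta_j$ and $\theta_{d-j}=\gamma^{-j}-\gamma^{j}=-\theta_j$; the first shows that the genuinely distinct entries among the $\delta_j$ are $\delta_0,\delta_1,\dots,\delta_{(d-1)/2}$, which accounts for the stated length $(d+1)/2$, while listing $\theta_0,\dots,\theta_{d-1}$ accounts for the length $d$.

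Next, for the recurrences I would simply expand the products. For the $\delta$-sequence,
\[
(\gamma+\gamma^{-1})\bigl(\gamma^{j-1}+\gamma^{-(j-1)}\bigr)=\bigl(\gamma^{j}+\gamma^{-j}\bigr)+\bigl(\gamma^{j-2}+\gamma^{-(j-2)}\bigr)=\delta_j+\delta_{j-2},
\]
which rearranges to $\delta_j=\delta_1\delta_{j-1}-\delta_{j-2}$; the same computation with a minus sign gives $\delta_1\theta_{j-1}=\theta_j+\theta_{j-2}$, hence $\theta_j=\delta_1\theta_{j-1}-\theta_{j-2}$. The initial data $\delta_0=2$, $\delta_1=\gamma+\gamma^{-1}$, $\theta_0=0$, $\theta_1=\gamma-\gamma^{-1}$ are immediate from the definitions and $\gamma^q=\gamma^{-1}$, so an induction on $j$ shows that the sequences generated by these recurrences from the given seeds coincide with $\gamma^j+\gamma^{-j}$ and $\gamma^j-\gamma^{-j}$, respectively. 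Finally, squaring yields $\delta_j^2=\gamma^{2j}+2+\gamma^{-2j}=\delta_{2j}+2$ and $\theta_j^2=\gamma^{2j}-2+\gamma^{-2j}=\delta_{2j}-2$, whence $\delta_j^2-2=\theta_j^2+2=\delta_{2j}$ for all $j$, in particular for $0\le j\le(d-1)/2$.

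I do not expect a genuine obstacle: once $\gamma^q=\gamma^{-1}$ is established, each claim is a one-line computation. The only points needing a little care are bookkeeping ones, namely matching the advertised "lengths" of the two sequences to the symmetry relations $\delta_{d-j}=\delta_j$ and $\theta_{d-j}=-\theta_j$, and being clear that the displayed recurrences are read as \emph{definitions} whose solutions are then identified with $\gamma^j\pm\gamma^{-j}$, rather than as identities to be checked on an already-specified sequence.
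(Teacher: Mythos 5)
Your proof is correct and follows essentially the same route as the paper: both reduce everything to the identity $\gamma^{q}=\gamma^{-1}$ (from $d\mid(q+1)$) and then verify the symmetry relations, the three-term recurrences, and the squaring identities by direct expansion. Your write-up merely fills in the elementary computations that the paper dismisses with ``it is easy to verify.''
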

\begin{proof} Let $\gamma$ be a primitive $d$th root of unity in $\mathbb{F}_{q^2}^*$  and $d|(q+1)$. Let $\delta_j=\gamma^j+\gamma^{qj}$  and $\theta_j=\gamma^j-\gamma^{qj}$ for $0\le j\le d-1$.  Since $\gamma^{qj}=\gamma^{-j}=\gamma^{d-j}$ for all $0\le j\le (d-1)/2$, we get that $\delta_j=\delta_{d-j}$ and $\theta_{j}=-\theta_{d-j}$ for $1\le j\le (d-1)/2$ with $\delta_0=2$ and $\theta_0=0$. It is easy to verify that $\delta_j=\delta_1\delta_{j-1}-\delta_{j-2}$ and  $\theta_j=\delta_1\theta_{j-1}-\theta_{j-2}$ for  $j\ge 2$. Also,  for any $0\le j\le (d-1)/2$, we establish the relations $\delta_j^2-2=\gamma^{2j}+\gamma^{2qj}=\delta_{2j}$ and $\theta_j^2+2=\gamma^{2j}+\gamma^{2qj}=\delta_{2j}$.
\end{proof}

\begin{lem}\label{delta} Let $\Delta_d=\{\delta_j:1\le j\le (d-1)/2\}$.  Then $\Delta_d$ has $(d-1)/2$ distinct elements of $\mathbb{F}_q^*$ and the mapping $\sigma_d:\Delta_d\rightarrow \Delta_d$ defined by $\sigma_d(\delta_j)=\delta_{2j}$ is a permutation.\end{lem}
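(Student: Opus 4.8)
The plan is to push everything down to the exponents. Since $d\mid q+1$ we have $q\equiv-1\pmod d$ and $q^2\equiv1\pmod d$, so $\gamma^{qj}=\gamma^{-j}=\gamma^{d-j}$ and $\delta_j=\gamma^j+\gamma^{-j}$. I would first record the membership $\Delta_d\subseteq\mathbb{F}_q^*$: the relation $\delta_j^q=\gamma^{qj}+\gamma^{q^2j}=\gamma^{qj}+\gamma^j=\delta_j$ (also part of Lemma \ref{gammad}) gives $\delta_j\in\mathbb{F}_q$, and $\delta_j\neq0$ because $\delta_j=0$ would force $\gamma^{2j}=-1$, i.e. an element of order $2$ inside the group $\langle\gamma\rangle$ of odd order $d$ in a field of odd characteristic — impossible by Lagrange.

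The technical core is a single exponent lemma: for integers $a,b$ one has $\delta_a=\delta_b$ if and only if $a\equiv\pm b\pmod d$. Indeed $\gamma^a+\gamma^{-a}=\gamma^b+\gamma^{-b}$ rearranges as $(\gamma^a-\gamma^b)(1-\gamma^{-a-b})=0$, so $\gamma^a=\gamma^b$ or $\gamma^{a+b}=1$, which is exactly $a\equiv b$ or $a\equiv-b\pmod d$; the converse is clear. Applying this with $1\le a,b\le(d-1)/2$, the value $a+b$ lies strictly between $0$ and $d$, so $a\equiv-b\pmod d$ is excluded and $a\equiv b\pmod d$ forces $a=b$. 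Hence $j\mapsto\delta_j$ is injective on $\{1,\dots,(d-1)/2\}$, giving $|\Delta_d|=(d-1)/2$.

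For the permutation statement I would first check $\sigma_d$ is well defined with values in $\Delta_d$: for $1\le j\le(d-1)/2$ we have $2\le 2j\le d-1$, and using $\delta_{d-i}=\delta_i$ from Lemma \ref{gammad} we may replace $2j$ by $2j$ itself if $2j\le(d-1)/2$, or by $d-2j\in\{1,\dots,(d-1)/2\}$ otherwise; in either case $\delta_{2j}\in\Delta_d$. Injectivity of $\sigma_d$ then follows from the exponent lemma: $\delta_{2i}=\delta_{2j}$ gives $2i\equiv\pm2j\pmod d$, and since $\gcd(2,d)=1$ this yields $i\equiv\pm j\pmod d$, hence $i=j$ for $i,j\in\{1,\dots,(d-1)/2\}$. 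An injective self-map of a finite set is bijective, so $\sigma_d$ is a permutation. Equivalently, multiplication by $2$ is a permutation of $\mathbb{Z}/d\mathbb{Z}$ commuting with negation, so it descends to a permutation of the $(d-1)/2$ nonzero orbits $\{j,-j\}$, which the injective map $j\mapsto\delta_j$ identifies bijectively with $\Delta_d$.

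The main obstacle is bookkeeping rather than ideas: one must be sure that folding $2j$ back through the symmetry $i\leftrightarrow d-i$ always lands in $\{1,\dots,(d-1)/2\}$ and never produces $0$ — precisely the point where oddness of $d$ (so $j\neq d-j$ and $\gcd(2,d)=1$) and the range $1\le j\le(d-1)/2$ are used. Once the equivalence $\delta_a=\delta_b\iff a\equiv\pm b\pmod d$ is established, both halves of the lemma are formal consequences.
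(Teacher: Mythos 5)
Your proof is correct, and it reorganizes the argument around a different pivot than the paper. The paper proves injectivity of $\sigma_d$ by staying at the level of the values $\delta_j$: from $\delta_{2j}=\delta_j^2-2$ it deduces $\delta_{j_1}^2=\delta_{j_2}^2$, hence $\delta_{j_1}=\pm\delta_{j_2}$, and then excludes the minus sign by showing $\delta_{j_1}=-\delta_{j_2}$ would force $\gamma^{j_1+j_2}=-1$ or $\gamma^{j_1-j_2}=-1$, impossible since $\langle\gamma\rangle$ has odd order. You instead work entirely in the exponents: your equivalence $\delta_a=\delta_b\iff a\equiv\pm b\pmod d$, proved via the factorization $(\gamma^a-\gamma^b)(1-\gamma^{-a-b})=0$, makes both the distinctness claim and the injectivity claim one-line consequences, with injectivity of $\sigma_d$ coming from invertibility of $2$ modulo the odd integer $d$ rather than from excluding a sign. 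The two arguments use the oddness of $d$ at different places (no element of order $2$ in $\langle\gamma\rangle$ versus $\gcd(2,d)=1$), and yours has the advantage of isolating a reusable equivalence and of explicitly verifying $\delta_j\neq 0$ (so that $\Delta_d\subseteq\mathbb{F}_q^*$ as claimed), a point the paper's proof passes over in silence; the paper's version has the advantage of exhibiting $\sigma_d$ concretely as the polynomial map $t\mapsto t^2-2$ on $\Delta_d$, which is what is actually used later in Theorem \ref{phi4k+1}. The well-definedness step (folding $2j$ back into $\{1,\dots,(d-1)/2\}$ via $\delta_{d-t}=\delta_t$) is identical in both.
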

\begin{proof}  Let  $\delta_{j_1}$, $\delta_{j_2}$ in $\Delta_d$ such that $\delta_{j_1}=\delta_{j_2}$, where $1\le j_1, j_2\le (d-1)/2$. By Lemma \ref{gammad}, we have $j_2=d-j_1$, a contradiction to the choice of $j_1$ and $j_2$. Thus $\Delta_d$ has $(d-1)/2$ distinct elements.  By Lemma \ref{gammad}, $\delta_j^2-2=\delta_{2j}$ for $1\le j\le (d-1)/2$. If $1\le 2j\le (d-1)/2$, then $\delta_{2j}\in\Delta_d$, and if $(d+1)/2\le 2j\le d-1$, then $1\le t\le (d-1)/2$ for $t=d-2j$ and hence $\delta_{2j}=\delta_{d-t}=\delta_t\in\Delta_d$. It follows that  $\delta_{2j}\in \Delta_d$  for every $1\le j\le (d-1)/2$.  Now, define $\sigma_d:\Delta_d\rightarrow \Delta_d$ such that $\sigma_d(\delta_j)=\delta_{2j}$. Obviously,  $\sigma_d$ is well defined mapping. Further, let $\sigma_d(\delta_{j_1})=\sigma(\delta_{j_2})$ for $1\le j_1, j_2\le (d-1)/2$, then $\delta_{2j_1}=\delta_{2j_2}$.  This gives either $\delta_{j_1}=\delta_{j_2}$ or $\delta_{j_1}=-\delta_{j_2}$. If $\delta_{j_1}=-\delta_{j_2}$, then $\gamma^{j_1+j_2}=-1$ or $\gamma^{j_1-j_2}=-1$, which is not possible because the order of $\gamma$ is odd. Thus for any $1\le j_1, j_2\le (d-1)/2$, $\sigma_d(\delta_{j_1})=\sigma_d(\delta_{j_2})$ implies that $\delta_{j_1}=\delta_{j_2}$ and hence $\sigma_d$ is a permutation.\end{proof}

\section{ Factorization of  $\Phi_{2^n}(x^d)\in\mathbb{F}_q[x]$}
  In this section, we study a factorization of decomposable cyclotomic polynomials of the type $\Phi_{2^n}(x^d)$ over $\mathbb{F}_q$, where $d$ is an odd  divisor of $q+1$ and integer $n\ge1$.  We begin with the more general factorization:  
 \begin{lem}\label{club}Let  $q$ and $d$ be odd integers.  Then, for any integer $n\ge2$,  
\begin{eqnarray*} \label{club}
x^{2^nd}-1=\left\{ \begin{array}{lcl}
 \displaystyle{(x^{d}-1)(x^{d}+1)\prod_{k=2}^{n}\Phi_{2^{k}}(x^d)} & \mbox{for} & 2\le n\le m-1\\\displaystyle{(x^{2^{m-1}d}-1)\prod_{r=0}^{n-m}\Phi_{2^m}(x^{2^rd})}& \mbox{for}
& n\ge m.
\end{array}\right.\end{eqnarray*}\end{lem}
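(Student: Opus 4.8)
The plan is to deduce both displayed identities from the single elementary factorization $y^{2^{n}}-1=\prod_{k\mid 2^{n}}\Phi_{k}(y)$, which holds over $\mathbb{Z}[x]$ and hence over $\mathbb{F}_{q}[x]$ (legitimate since $\gcd(2,q)=1$, so each $\Phi_{2^{k}}$ is the characteristic-$p$ reduction of the integral cyclotomic polynomial). Since $\Phi_{1}(y)=y-1$ and $\Phi_{2}(y)=y+1$, this reads $y^{2^{n}}-1=(y-1)(y+1)\prod_{k=2}^{n}\Phi_{2^{k}}(y)$. Substituting $y=x^{d}$ gives at once $x^{2^{n}d}-1=(x^{d}-1)(x^{d}+1)\prod_{k=2}^{n}\Phi_{2^{k}}(x^{d})$, which is the first branch of the claim; note this is in fact valid for every $n\ge 2$, the restriction $n\le m-1$ being imposed only because these are the orders $2^{k}$ for which $\Phi_{2^{k}}(x^{d})$ will subsequently be split by Lemma \ref{cyclo2n}.

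For the second branch, assume $n\ge m$ and split the product $\prod_{k=2}^{n}$ at the index $k=m$. Because $q$ is odd we have $8\mid q^{2}-1$, hence $m=\nu_{2}(q^{2}-1)\ge 3$, so the initial block $\prod_{k=2}^{m-1}\Phi_{2^{k}}(x^{d})$ is a genuine (nonempty) product; applying the first identity with $n$ replaced by $m-1$ collapses $(x^{d}-1)(x^{d}+1)\prod_{k=2}^{m-1}\Phi_{2^{k}}(x^{d})$ back to $x^{2^{m-1}d}-1$. It then remains to rewrite the tail $\prod_{k=m}^{n}\Phi_{2^{k}}(x^{d})$: for each $k\ge m$, Lemma \ref{cyclotomic}(iii) (in its ``in particular'' form, with $k\mapsto m$, $r\mapsto k-m$) gives $\Phi_{2^{k}}(y)=\Phi_{2^{m}}\!\bigl(y^{2^{k-m}}\bigr)$, whence $\Phi_{2^{k}}(x^{d})=\Phi_{2^{m}}\!\bigl(x^{2^{k-m}d}\bigr)$, and re-indexing by $r=k-m$ turns the tail into $\prod_{r=0}^{n-m}\Phi_{2^{m}}\!\bigl(x^{2^{r}d}\bigr)$. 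Assembling the two pieces yields $x^{2^{n}d}-1=(x^{2^{m-1}d}-1)\prod_{r=0}^{n-m}\Phi_{2^{m}}\!\bigl(x^{2^{r}d}\bigr)$, as required.

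There is no deep obstacle here; the only points needing a little care are (i) justifying the use of $y^{2^{n}}-1=\prod_{k\mid 2^{n}}\Phi_{k}(y)$ over $\mathbb{F}_{q}$ rather than over $\mathbb{Q}$, which rests on $\gcd(2,q)=1$; and (ii) the bookkeeping of the shift $k\mapsto k-m$ together with the degenerate situations (the product $\prod_{r=0}^{n-m}$ reduces to a single factor when $n=m$, and the block $\prod_{k=2}^{m-1}$ never degenerates since $m\ge 3$). I would present the argument in exactly this order: record the substitution identity, note $m\ge 3$, perform the split-and-collapse, then re-index the tail and combine.
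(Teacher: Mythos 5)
Your proposal is correct and follows essentially the same route as the paper's own proof: start from $x^{2^n}-1=(x-1)(x+1)\prod_{k=2}^{n}\Phi_{2^k}(x)$, substitute $x\mapsto x^d$, and for $n\ge m$ collapse the initial block and rewrite the tail via Lemma \ref{cyclotomic}(iii). Your version merely adds some welcome care (justifying the cyclotomic identity over $\mathbb{F}_q$ and noting $m\ge 3$) that the paper leaves implicit.
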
 
\begin{proof} For any integer $n\ge 2$, it is trivial to note that \begin{center}$x^{2^n}-1=\displaystyle{(x-1)(x+1)\prod_{2\le k\le n}\Phi_{2^k}(x)}$.\end{center} For a moment if $n\ge m$, then we express \begin{center}$x^{2^n}-1=\displaystyle{(x^{m-1}-1)\prod_{m\le k\le n}\Phi_{2^k}(x)}$.\end{center} Let $n=m+r$, then by Lemma \ref{cyclotomic},  $\Phi_{2^n}(x)=\Phi_{2^m}(x^{2^r})$, where $0\le r\le n-m$. By applying $x\rightarrow x^d$, we can easily meet the desired conclusion.\end{proof} In view of the above, our idea for obtaining the explicit factorization of $x^{2^nd}-1$ over $\mathbb{F}_q$ is to factorize decomposable cyclotomic polynomials  of the form $\Phi_{2^k}(x^{d})$ of smaller orders into the product of irreducible factors over $\mathbb{F}_q$. Note, $\Phi_1(x^d)=x^d-1$, $\Phi_2(x^d)=\Phi_1(-x^d)=x^d+1$ and for $2\le k\le m-1$, from Lemma \ref{cyclo2n},  the cyclotomic polynomial $\Phi_{2^k}(x^d)=x^{2^{k-1}d}+1$  over $\mathbb{F}_{q^2}$ can be written as:  \begin{eqnarray} \label{beta1}
\Phi_{2^k}(x^d)=
\displaystyle{\prod_{i=1}^{2^{k-2}}(x^d\pm\beta_{2^k}^{d(2i-1)})}.\end{eqnarray} 
Since  $2^k|(q+1)$ if $q\equiv3\pmod 4$, and $2^k|(q-1)$ if $q\equiv1\pmod 4$, so we have two distinguishable cases as follows:
 \begin{thm}\label{phi4k+1} Let $q\equiv1\pmod 4$ and $q\equiv-1\pmod d$. Then \begin{enumerate}
\item[(i)] If $2\le k\le s$,  the factorization of $\Phi_{2^k}(x^d)$ into $2^{k-2}(d+1)$  irreducible factors over $\mathbb{F}_q$ is given by:
\begin{eqnarray}\label{phi} 
\Phi_{2^k}(x^d)= \displaystyle{\prod_{\substack{i=1\\1\le j\le (d-1)/2}}^{2^{k-2}}(x\pm\alpha_{2^k}^{2i-1})(x^2\pm
\alpha_{2^{k}}^{2i-1}\delta_jx+
\alpha_{2^{k-1}}^{2i-1})}. \end{eqnarray}  
 \item[(ii)]  For any integer $r\ge1$, the factorization of $\Phi_{2^{s+r}}(x^d)$ into $2^{s-1}d$ irreducible factors over $\mathbb{F}_q$ is given by:
\begin{eqnarray}\label{phim} 
\Phi_{2^{s+r}}(x^d)=\displaystyle{\prod_{\substack{i=1\\1\le j\le (d-1)/2\\0\le l\le 1}}^{2^{s-2}}(x^{2^r}\pm\alpha_{2^k}^{2i-1})(x^{2^r}\pm
\alpha_4^{l}\beta_{2^{s+1}}^{2i-1}\theta_jx^{2^{r-1}}-
\alpha_2^l\alpha_{2^{s}}^{2i-1})}\nonumber\\.\end{eqnarray} 
 
\end{enumerate} 
 \end{thm}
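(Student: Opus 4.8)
The plan is to handle the two parts separately: part~(i) follows directly from the apparatus of Section~2, while part~(ii) reduces, after a clean ``substitution $x\mapsto x^{2^{r-1}}$'' step, to the base case $r=1$, which is where the real work lies.

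\textbf{Part (i).} For $2\le k\le s$ one has $2^{k}\mid q-1$, so $\alpha_{2^{k}}\in\mathbb{F}_q^{*}$ and $\beta_{2^{k}}$ in \eqref{beta1} may be taken equal to $\alpha_{2^{k}}$; by Remark~\ref{rephi} this gives $\Phi_{2^{k}}(x^{d})=\prod_{i=1}^{2^{k-2}}\bigl(x^{d}\pm\alpha_{2^{k}}^{d(2i-1)}\bigr)$ over $\mathbb{F}_q$. Since $d$ is odd, $\pm\alpha_{2^{k}}^{d(2i-1)}=(\pm\alpha_{2^{k}}^{2i-1})^{d}$, so each binomial is of the form $x^{d}-a^{d}$ with $a=\pm\alpha_{2^{k}}^{2i-1}\in\mathbb{F}_q^{*}$ and $d$ an odd divisor of $q+1$; hence Lemma~\ref{factm} (Expression~\eqref{xd-c2}) applies verbatim. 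I would then use Lemma~\ref{gammad} to write $\gamma^{j}+\gamma^{qj}=\delta_{j}$ and the identity $\alpha_{2^{k}}^{2(2i-1)}=\alpha_{2^{k-1}}^{2i-1}$; this turns the output of Lemma~\ref{factm} into precisely the factors displayed in \eqref{phi}. The linear factors are irreducible, and $x^{2}\pm\alpha_{2^{k}}^{2i-1}\delta_{j}x+\alpha_{2^{k-1}}^{2i-1}$ is irreducible over $\mathbb{F}_q$ because (as in the proof of Lemma~\ref{factm}) it is the minimal polynomial of $\pm\alpha_{2^{k}}^{2i-1}\gamma^{j}$ and $\gamma^{j}\notin\mathbb{F}_q$. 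Distinctness of the $2^{k-2}(d+1)$ factors I would get by comparing constant terms and the coefficient of $x$, using that the exponents $2i-1$ are pairwise incongruent $\bmod\,2^{k}$, the $\delta_{j}$ are pairwise distinct and nonzero (Lemma~\ref{delta}), and $\delta_{j}+\delta_{j'}\ne0$ since $\gamma$ has odd order (the argument inside the proof of Lemma~\ref{delta}). As the product then has degree $2^{k-1}d=\deg\Phi_{2^{k}}(x^{d})$, this is the complete factorization.

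\textbf{Part (ii), base case $r=1$.} Here $q\equiv1\pmod4$ forces $m=s+1$, so $2^{s+1}\mid q^{2}-1$, $2^{s+1}\nmid q-1$, and $\nu_2(q+1)=1$. First I would factor $\Phi_{2^{s+1}}(x)$ over $\mathbb{F}_q$: from $q\equiv1+2^{s}\pmod{2^{s+1}}$ one gets $\beta_{2^{s+1}}^{q}=-\beta_{2^{s+1}}$, so pairing each primitive $2^{s+1}$th root with its Frobenius conjugate yields $\Phi_{2^{s+1}}(x)=\prod_{i=1}^{2^{s-1}}\bigl(x^{2}-\alpha_{2^{s}}^{2i-1}\bigr)=\prod_{i=1}^{2^{s-2}}\bigl(x^{2}\pm\alpha_{2^{s}}^{2i-1}\bigr)$, where $\alpha_{2^{s}}:=\beta_{2^{s+1}}^{2}\in\mathbb{F}_q^{*}$. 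Applying $x\mapsto x^{d}$, it remains to factor each $x^{2d}-\alpha_{2^{s}}^{2i-1}$ over $\mathbb{F}_q$. Its roots lie in $\mathbb{F}_{q^{2}}$ (their orders divide $2^{s+1}d\mid q^{2}-1$) and none lies in $\mathbb{F}_q$; choosing a primitive $2^{s+1}$th root $w_{0}\in\mathbb{F}_{q^{2}}$ with $w_{0}^{2d}=\alpha_{2^{s}}^{2i-1}$, the roots are $\{\pm w_{0}\gamma^{t}:0\le t\le d-1\}$, and since $w_{0}^{q}=-w_{0}$ and $\gamma^{q}=\gamma^{-1}$ the conjugate of $w_{0}\gamma^{t}$ is $-w_{0}\gamma^{-t}$. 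Hence $x^{2d}-\alpha_{2^{s}}^{2i-1}$ factors over $\mathbb{F}_q$ as $(x^{2}-w_{0}^{2})\prod_{t=1}^{d-1}\bigl(x^{2}-w_{0}(\gamma^{t}-\gamma^{-t})x-w_{0}^{2}\bigr)$, and by Lemma~\ref{gammad} the coefficient is $\gamma^{t}-\gamma^{qt}=\theta_{t}$, with $\theta_{d-t}=-\theta_{t}$. Finally, writing $w_{0}=\alpha_{4}^{l}\beta_{2^{s+1}}^{2i-1}$ (so $w_{0}^{2}=\alpha_{2}^{l}\alpha_{2^{s}}^{2i-1}$), as $(l,i)$ runs over $\{0,1\}\times\{1,\dots,2^{s-2}\}$ it hits one representative of each pair $\{w,-w\}$ of primitive $2^{s+1}$th roots, which is all these quadratics depend on; collecting the pieces produces \eqref{phim} for $r=1$. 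Distinctness of the $2^{s-1}d$ quadratics, hence completeness by a degree count, I would check via the constant terms $\pm w_{0}^{2}$ and the $x$-coefficients $\pm w_{0}\theta_{j}$, once more using that $t\mapsto\theta_{t}$ is injective on $\{1,\dots,d-1\}$ and $\theta_{j}+\theta_{j'}\ne0$ because $\gamma$ has odd order.

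\textbf{Part (ii), $r\ge2$.} By Lemma~\ref{cyclotomic}(iii), $\Phi_{2^{s+r}}(x)=\Phi_{2^{s+1}}(x^{2^{r-1}})$, hence $\Phi_{2^{s+r}}(x^{d})=\Phi_{2^{s+1}}\bigl((x^{d})^{2^{r-1}}\bigr)$, so each irreducible factor $g(x)$ of $\Phi_{2^{s+1}}(x^{d})$ contributes $g(x^{2^{r-1}})$. Every such $g$ has degree $2$ and order $e\in\{2^{s+1},\,2^{s+1}d/\gcd(d,j)\}$, with $e\mid q^{2}-1$ and $\nu_2(e)=s+1=\nu_2(q^{2}-1)$; so the prime $2$ divides $e$ but not $(q^{2}-1)/e$, and $4\mid q^{2}-1$. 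Lemma~\ref{tirr} with $t=2^{r-1}$ then makes each $g(x^{2^{r-1}})$ irreducible over $\mathbb{F}_q$ of degree $2^{r}$, and the substitution preserves distinctness, which yields \eqref{phim} for all $r\ge1$ with $2^{s-1}d$ irreducible factors each of degree $2^{r}$. The step I expect to be the real obstacle is exactly the base case $r=1$ of part~(ii): unlike in part~(i), $x^{2d}-\alpha_{2^{s}}^{2i-1}$ is not a $2d$th power in $\mathbb{F}_q$ (so Lemma~\ref{factm} does not apply directly) and $\Phi_{2^{s+1}}(x)$ does not split over $\mathbb{F}_q$, so the factorization has to be obtained by explicit Frobenius bookkeeping in $\mathbb{F}_{q^{2}}$ --- the identities $\beta_{2^{s+1}}^{q}=-\beta_{2^{s+1}}$, $\gamma^{q}=\gamma^{-1}$, the pairing $w_{0}\gamma^{t}\leftrightarrow-w_{0}\gamma^{-t}$ --- followed by recognizing the $\mathbb{F}_q$-coefficients $w_{0}^{2}$ and $w_{0}\theta_{t}$ in the normalized form $\alpha_{2}^{l}\alpha_{2^{s}}^{2i-1}$ and $\alpha_{4}^{l}\beta_{2^{s+1}}^{2i-1}\theta_{j}$. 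The remaining parts --- part~(i) and the passage from $r=1$ to $r\ge2$ --- are straightforward applications of Lemmas~\ref{factm}, \ref{gammad}, \ref{delta}, \ref{cyclotomic}, and \ref{tirr}.
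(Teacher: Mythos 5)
Your proposal is correct, and for part (i) and for the passage from $r=1$ to general $r$ (via Lemma \ref{cyclotomic}(iii) and Lemma \ref{tirr} applied to the quadratic factors, whose orders have $2$-adic valuation $s+1=\nu_2(q^2-1)$) you follow essentially the same route as the paper. Where you genuinely diverge is the crux, the base case $r=1$ of part (ii). The paper gets there by writing $\Phi_{2^{s+1}}(x^d)=\Phi_{2^s}(x^{2d})$ and substituting $x\mapsto x^2$ into \eqref{phi}, producing quartic trinomials $x^4\pm\alpha_{2^s}^{2i-1}\delta_jx^2+\alpha_{2^{s-1}}^{2i-1}$; it then shows these are reducible (via the order criterion of Lemma \ref{tirr}), invokes the permutation $\sigma_d(\delta_j)=\delta_{2j}=\theta_j^2+2$ of Lemma \ref{delta} to replace $\delta_j$ by $\theta_j^2+2$ inside the product over $j$, and splits each quartic as the difference of squares $(x^2-\alpha_{2^s}^{2i-1})^2-(\beta_{2^{s+1}}^{2i-1}\theta_jx)^2$, handling the $+$ sign by the substitution $x\mapsto\alpha_4x$. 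You instead factor $\Phi_{2^{s+1}}(x)$ into the binomials $x^2-\alpha_{2^s}^{2i-1}$, compose with $x^d$, and factor each $x^{2d}-\alpha_{2^s}^{2i-1}$ by listing its roots $\pm w_0\gamma^t$ in $\mathbb{F}_{q^2}$ and pairing Frobenius conjugates $w_0\gamma^t\leftrightarrow -w_0\gamma^{-t}$ (using $w_0^q=-w_0$, $\gamma^q=\gamma^{-1}$), then normalizing $w_0=\alpha_4^l\beta_{2^{s+1}}^{2i-1}$. Both arguments are sound and yield the same list of factors; yours dispenses entirely with Lemma \ref{delta} and with the reducibility check on the quartics, at the price of the sign-representative bookkeeping for $w_0$ (which you do carry out correctly, including the check that $\theta_t\mapsto\theta_{t'}$ collisions would force $\gamma^{t+t'}=-1$, impossible for $\gamma$ of odd order). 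The paper's route, conversely, stays at the level of polynomial identities over $\mathbb{F}_q$ once \eqref{phi} is known, which is why it needs the permutation lemma. One cosmetic point: the binomials in \eqref{phim} should read $x^{2^r}\pm\alpha_{2^s}^{2i-1}$ (not $\alpha_{2^k}$); your derivation produces the correct ones.
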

\begin{proof}  If $q\equiv1\pmod 4$, then $m=s+1$. It follows that $2^k|(q-1)$  and $\beta_{2^k}=\alpha_{2^k}$ for $2\le k\le s$ and $2^{s+1}\nmid(q-1)$ as $s=\nu_2(q-1)$. Now we have the following two cases:
\\{\bf{Case (i)}} For $2\le k\le s$,  from Expression \ref{beta1}, a factorization of $\Phi_{2^k}(x^d)$  over $\mathbb{F}_q$ is given by: \begin{eqnarray*} 
\Phi_{2^k}(x^d)=\displaystyle{\prod_{i=1}^{2^{k-2}}(x^d\pm\alpha_{2^k}^{d(2i-1)})}.\end{eqnarray*} 
 For each $1\le i\le 2^{k-2}$, from  Expression \ref{xd-c2}, we  find the following factorization of $x^d\pm\alpha_{2^k}^{d(2i-1)}$ into the product of $d+1$ irreducible factors over $\mathbb{F}_q$ \begin{eqnarray}\label{alpha}x^d\pm\alpha_{2^k}^{d(2i-1)}&=&
(x\pm\alpha_{2^k}^{2i-1})\prod_{j=1}^{(d-1)/2}(x^2\pm\alpha_{2^k}^{2i-1}\delta_j x+\alpha_{2^{k-1}}^{2i-1})\end{eqnarray} and hence the number of irreducible factors of $\Phi_{2^k}(x^d)$  over $\mathbb{F}_q$ is $2^{k-2}(d+1)$.\newline
{\bf{Case (ii)}}  If $q\equiv1\pmod 4$, then $2^{s+1}\nmid(q-1)$, but $2^{s+1}|(q^2-1)$.   Applying  Lemma \ref{cyclotomic},  Expression \ref{phi} yields a factorization of $\Phi_{2^{s+1}}(x^d)=\Phi_{2^s}(x^{2d})$ over $\mathbb{F}_q$ with $2^{s-2}(d+1)$ factors  as follows: \begin{eqnarray}\label{4}\Phi_{2^{s+1}}(x^d)=\prod_{i=1}^{2^{s-2}}\bigg(
(x^2\pm\alpha_{2^s}^{2i-1})\prod_{j=1}^{(d-1)/2}(x^4\pm\alpha_{2^s}^{2i-1}\delta_jx^2+\alpha_{2^{s-1}}^{2i-1}).\bigg)\nonumber\\\end{eqnarray} 
Since $\alpha_{2^s}^{2i-1}$ is a non-square element in $\mathbb{F}_q^*$, so $x^2-\alpha_{2^s}^{2i-1}$ is  irreducible over $\mathbb{F}_q$. Now, we check the irreducibility of   trinomials $x^{4}-\alpha_{2^s}^{2i-1}\delta_{j}x^2+
\alpha_{2^{s-1}}^{2i-1}$, where  $1\le i\le 2^{s-2}$ and $1\le j\le (d-1)/2$ by applying conditions of Lemma \ref{tirr}. Let $f(x)=x^{2}-\alpha_{2^s}^{2i-1}\delta_{j}+
\alpha_{2^{s-1}}^{2i-1}$, where fixed $i$ and $j$ are as $1\le i\le 2^{s-2}$ and $1\le j\le (d-1)/2$. Then  $f(x^{2})=x^{4}-\alpha_{2^s}^{2i-1}\delta_jx^{2}+
\alpha_{2^{s-1}}^{2i-1}$. Observe that $f(x)$ is the minimal polynomial of $\alpha_{2^s}^{2i-1}\gamma^j$ over $\mathbb{F}_q$ so the order of $f(x)$ is $2^sd_1$, where $d_1=d/\gcd(j,d)$. Recall $m=\nu_2(q^2-1)$ and $s=\nu_2(q-1)$. Since $q\equiv1\pmod 4$, so that $m=s+1$ and hence   $\displaystyle{\gcd(2,(q^2-1)/2^sd_1)=2}$. This goes against  the condition (ii) of Lemma \ref{tirr}. Therefore $f(x^{2})$ is reducible over  $\mathbb{F}_q$. Using the similar argument given in \cite[Theorem 3.14]{lidl}, $x^{2}+\alpha_{2^s}^{2i-1}$ is irreducible while $f(-x^2)=x^{4}+\alpha_{2^s}^{2i-1}\delta_{j}x^2+
\alpha_{2^{s-1}}^{2i-1}$ is reducible over $\mathbb{F}_q$.

 It follows that Expression \ref{4} contains $2^{s-1}$ irreducible binomials and $2^{s-2}(d-1)$ reducible trinomials of degree $4$ over $\mathbb{F}_q$. Our aim is to split these $2^{s-2}(d-1)$ trinomials into a product of $2^{s-1}(d-1)$ trinomials of degree $2$ over $\mathbb{F}_q$.

  First consider $\displaystyle{\prod_{j=1}^{(d-1)/2}
\big(x^{4}-
\alpha_{2^s}^{2i-1}\delta_jx^{2}+
\alpha_{2^{s-1}}^{2i-1}\big)}$,  a  product of $2^{s-2}(d-1)/2$ trinomials from  Expression \ref{4}. Applying Lemma  \ref{delta}, we have that \begin{eqnarray*}\label{red}\prod_{j=1}^{(d-1)/2}
\big(x^{4}-
\alpha_{2^s}^{2i-1}\delta_jx^{2}+
\alpha_{2^{s-1}}^{2i-1}\big)&=&\prod_{j=1}^{(d-1)/2}
\big(x^{4}-
\alpha_{2^s}^{2i-1}\sigma_d(\delta_j)x^{2}+
\alpha_{2^{s-1}}^{2i-1}\big)\nonumber \\&=&\prod_{j=1}^{(d-1)/2}\big(x^{4}-
\alpha_{2^s}^{2i-1}(\theta_j^2+2)x^{2}+
\alpha_{2^{s-1}}^{2i-1}\big)\nonumber \\&=&\prod_{j=1}^{(d-1)/2}\big((x^{2}-
\alpha_{2^{s}}^{2i-1})^2-
(\beta_{2^{s+1}}^{2i-1}\theta_jx)^2\big)
\nonumber \\&=&\prod_{j=1}^{(d-1)/2}(x^2\pm
\beta_{2^{s+1}}^{2i-1}\theta_jx-
\alpha_{2^{s}}^{2i-1}).\end{eqnarray*}
Applying the substitution  $x\rightarrow \alpha_4x$, it is quite easy to see

\begin{eqnarray*}\label{red1}\prod_{j=1}^{(d-1)/2}
\big(x^{4}+
\alpha_{2^s}^{2i-1}\delta_jx^{2}+
\alpha_{2^{s-1}}^{2i-1}\big)&=&\prod_{j=1}^{(d-1)/2}(x^2\pm
\alpha_4\beta_{2^{s+1}}^{2i-1}\theta_jx+
\alpha_{2^{s}}^{2i-1}).\end{eqnarray*} Combining the above two expressions, we get \begin{eqnarray*}\label{red2}\prod_{j=1}^{(d-1)/2}
\big(x^{4}\pm
\alpha_{2^s}^{2i-1}\delta_jx^{2}+
\alpha_{2^{s-1}}^{2i-1}\big)&=&\prod_{\substack{j=1\\0\le l\le 1}}^{(d-1)/2}(x^2\pm
\alpha_4^l\beta_{2^{s+1}}^{2i-1}\theta_jx-\alpha_2^l
\alpha_{2^{s}}^{2i-1}).\end{eqnarray*}
Since $\beta_{2^{s+1}}^{q-1}=-1$ and $\theta_j^q=-\theta_j$ so that $(\beta_{2^{s+1}}^{2i-1}\theta_j)^q=
(\beta_{2^{s+1}}^{2i-1})^{q-1}\beta_{2^{s+1}}^{2i-1}(-\theta_j)=\beta_{2^{s+1}}^{2i-1}\theta_j$ and $(\alpha_4\beta_{2^{s+1}}^{2i-1}\theta_j)^q=
(\beta_{2^{s+1}}^{2i-1}\theta_j)^q=
\beta_{2^{s+1}}^{2i-1}\theta_j$, hence   $\beta_{2^{s+1}}^{2i-1}\theta_j$ and $\alpha_4\beta_{2^{s+1}}^{2i-1}\theta_j$ are elements of $\mathbb{F}_q^*$.  Using the above discussion,  $\Phi_{2^{s+1}}(x^d)$ can be written as a product of  $2^{s-2}(2+(\frac{d-1}{2}) 4)=2^{s-1}d$ irreducible factors over $\mathbb{F}_q$ as follows:
\begin{eqnarray}\label{phim} 
\Phi_{2^{s+1}}(x^d)=\displaystyle{\Phi_{2^s}(x^2)\prod_{\substack{i=1\\1\le j\le (d-1)/2\\0\le l\le 1}}^{2^{s-2}}(x^2\pm
\alpha_4^{l}\beta_{2^{s+1}}^{2i-1}\theta_jx-
\alpha_2^l\alpha_{2^{s}}^{2i-1})}\end{eqnarray} Applying $x\rightarrow x^{2^{r-1}}$ in Expression \ref{phim}, we get\begin{eqnarray}\label{phir} 
\Phi_{2^{s+r}}(x^d)=\displaystyle{\Phi_{2^s}(x^{2^r})\prod_{\substack{i=1\\1\le j\le (d-1)/2\\0\le l\le 1}}^{2^{s-2}}(x^{2^r}\pm
\alpha_4^{l}\beta_{2^{s+1}}^{2i-1}\theta_jx^{2^{r-1}}-
\alpha_2^l\alpha_{2^{s}}^{2i-1}).}\end{eqnarray} Let $g(x)=x^{2}-\beta_{2^{s+1}}^{2i-1}\theta_jx+\alpha_{2^{s}}^{2i-1}$, where $1\le i\le 2^{s-2}$ and $1\le j\le (d-1)/2$. Observe that $g(x)$ is the minimal polynomial of $\beta_{2^{s+1}}^{2i-1}\gamma^j$ over $\mathbb{F}_q$ and hence the order of $g(x)$ is $2^{s+1}d_2$, where $d_2$ is a divisor of $d$. Since $s+1=\nu_2(q^2-1)$, $2\nmid(q^2-1)/2^{s+1}$ and hence $\gcd(2^r, (q^2-1)/2^{s+1}d_2)=1$ for every $r\ge1$. From Lemma \ref{tirr} and \cite[Theorem 3.14]{lidl} for $s\ge2$, it follows that $g(x^{2^{r-1}})$, $g(-x^{2^{r-1}})$ are  irreducible over $\mathbb{F}_q$ of same orders $2^{s+r}d_2$.  Since $s\ge2$, so  the order of  $g(\pm\alpha_4x^{2^{r-1}})$ is $2^{s+r}d_2$, It follows that  the irreducibility of $g(\pm\alpha_4x^{2^{r-1}})$ over $\mathbb{F}_q$ can be verified  in similar fashion as discussed for $g(\pm x^{2^{r-1}})$. Therefore,   Expression \ref{phir} has $2^{s-1}d$ irreducible factors over $\mathbb{F}_q$ for every $r\ge1$.  This completes the proof.\end{proof}
\begin{Exaexa}  For $q=29$, $n=3$ and $d=5$, we obtain $\alpha_4=12$, $\beta_8=2\sqrt{3}$, $\gamma=-3+15\sqrt{3}$, $\gamma^{-1}=-3-15\sqrt{3}$, $\theta_1=\sqrt{3}$ and $\theta_2=-6\sqrt{3}$.  By Theorem  \ref{phi4k+1}, the explicit factorization of $\Phi_{8}(x^5)$ over $\mathbb{F}_{29}$ is given by  \begin{eqnarray*}\Phi_{8}(x^5)&=&\Phi_{8}(x)\prod_{\substack{1\le j\le 2\\0\le l\le 1}}(x^2\pm
(12)^{l}(2\sqrt{3})\theta_jx-
(-1)^l12)\\&=&\Phi_{8}(x)(x^2\pm
(2\sqrt{3})\theta_1x-
12)(x^2\pm
(2\sqrt{3})\theta_2x-
12)\\&&(x^2\pm
(12)(2\sqrt{3})\theta_1x+12)(x^2\pm
(12)(2\sqrt{3})\theta_2x+12)\\&=&(x^2+12)(x^2+17)(x^2\pm
6x+17)(x^2\pm 7x+17)\\&&(x^2\pm
14x+12)(x^2\pm
3x+12).\end{eqnarray*} As we know that $\Phi_{40}(x)=\dfrac{\Phi_8(x^5)}{\Phi_8(x)}$ and $\Phi_8(x)=(x^2-12)(x^2+12)$, so that $$\Phi_{40}(x)=(x^2\pm
6x+17)(x^2\pm 7x+17)(x^2\pm
14x+12)(x^2\pm
3x+12).$$ Moreover, for any integer $n\ge 3$, \begin{eqnarray*}\Phi_{2^n\dot 5}(x)&=&\Phi_{40}(x^{2^{n-3}})\\&=&(x^{2^{n-2}}\pm
6x^{2^{n-3}}+17)(x^{2^{n-2}}\pm 7x^{2^{n-3}}+17)\\&&(x^{2^{n-2}}\pm
14x^{2^{n-3}}+12)(x^{2^{n-2}}\pm
3x^{2^{n-3}}+12).\end{eqnarray*}\end{Exaexa}
  
 \begin{thm}\label{phi2nd3} Let $q\equiv3\pmod 4$ and  $q\equiv-1\pmod d$. 
Then the factorization of $\Phi_{2^k}(x^d)$ into  $2^{k-2}d$ irreducible factors  over $\mathbb{F}_q$ is given by:
  \begin{eqnarray}\label{phi4k+3} 
\Phi_{2^k}(x^d)=\left\{ \begin{array}{lcl}
 \displaystyle{\prod_{\substack{i=1\\0\le j\le d-1}}^{2^{k-2}}(x^2-\theta_{i,j,k}x+1)} & \mbox{for} & 2\le k\le m-1\\\displaystyle{\prod_{\substack{i=1\\0\le j\le d-1}}^{2^{m-3}}(x^{2^{k-m+1}}\pm\theta_{i,j}x^{2^{k-m}}-1)}& \mbox{for}
& k\ge m\ge 3.
\end{array}\right.\nonumber\\\end{eqnarray} 
 where  $\theta_{i,j,k}=\beta_{2^k}^{2i-1}\gamma^j+
\beta_{2^k}^{q(2i-1)}
\gamma^{qj}$  for $1\le i\le 2^{k-2}$ and $\theta_{i,j}=\beta_{2^m}^{2i-1}\gamma^j+
\beta_{2^m}^{q(2i-1)}
\gamma^{qj}$  for $1\le i\le 2^{m-3}$. The complete factorization of $\Phi_{2^k}(x)\in\mathbb{F}_q[x]$ is  given in \cite[Lemma 2.6]{singh}.
 \end{thm}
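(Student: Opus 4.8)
The plan is to produce all the roots of $\Phi_{2^k}(x^d)$ inside $\mathbb{F}_{q^2}$ (when $k\le m$) or inside a controlled extension (when $k>m$), group them into Frobenius orbits, and read off the resulting monic irreducible factors over $\mathbb{F}_q$; the case split $2\le k\le m-1$ versus $k\ge m$ will reflect a genuine change in how the Frobenius $\xi\mapsto\xi^{q}$ permutes the primitive $2^k$th roots of unity. First I would record the $2$-adic arithmetic: since $q\equiv3\pmod4$ we have $s=1$ and $m=\nu_2(q^2-1)=\nu_2(q+1)+1\ge3$; writing $q+1=2^{m-1}u$ with $u$ odd, the element $\beta_{2^k}$ lies in $\mathbb{F}_{q^2}^*$ for every $k\le m$ (as $2^k\mid q^2-1$), with $\beta_{2^k}^{q+1}=1$ when $2\le k\le m-1$ but $\beta_{2^m}^{q+1}=-1$, and $q\equiv 2^{m-1}-1\pmod{2^m}$, while $\gamma^{q}=\gamma^{-1}$ because $d\mid q+1$. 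Since $\gcd(2^k,d)=1$, every $2^kd$th root of unity is uniquely a product of a $2^k$th root and a $d$th root, so (using $d$ odd) the roots of $\Phi_{2^k}(x^d)$, namely those $\xi=\zeta\gamma^{j}$ with $\zeta$ a primitive $2^k$th root of unity and $0\le j\le d-1$, are precisely $\{\pm\beta_{2^k}^{2i-1}\gamma^{j}:1\le i\le 2^{k-2},\ 0\le j\le d-1\}$, a set of $2^{k-1}d=\deg\Phi_{2^k}(x^d)$ distinct elements; one also reaches this list by applying Lemma~\ref{factm} over $\mathbb{F}_{q^2}$ to each binomial $x^d\pm\beta_{2^k}^{d(2i-1)}$ in Expression~\ref{beta1}. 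For $k\le m$ each such root has order $2^k\cdot(d/\gcd(j,d))$, of $2$-adic valuation $k\ge2>1=\nu_2(q-1)$, hence lies in $\mathbb{F}_{q^2}\setminus\mathbb{F}_q$, and its minimal polynomial over $\mathbb{F}_q$ is $(x-\xi)(x-\xi^{q})=x^2-(\xi+\xi^{q})x+\xi^{q+1}$ with $\xi+\xi^{q}=\mathrm{Tr}_{\mathbb{F}_{q^2}/\mathbb{F}_q}(\xi)\in\mathbb{F}_q$ and $\xi^{q+1}\in\mathbb{F}_q$.

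For $2\le k\le m-1$, take $\xi=\beta_{2^k}^{2i-1}\gamma^{j}$; then $\xi^{q+1}=(\beta_{2^k}^{q+1})^{2i-1}\gamma^{j(q+1)}=1$ and $\xi+\xi^{q}=\theta_{i,j,k}$, so the minimal polynomial is $x^2-\theta_{i,j,k}x+1$. Using $\beta_{2^k}^{q}=\beta_{2^k}^{-1}$ and $-(2i-1)\equiv 2^{k-1}+(2i'-1)\pmod{2^k}$ with $i'=2^{k-2}+1-i$, one finds $\xi^{q}=-\beta_{2^k}^{2i'-1}\gamma^{-j}$, a ``minus'' root; thus the Frobenius carries the $2^{k-2}d$ ``plus'' roots $\beta_{2^k}^{2i-1}\gamma^{j}$ bijectively onto the ``minus'' roots, no plus root being fixed or conjugate to another plus root. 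Hence the $2^{k-2}d$ polynomials $x^2-\theta_{i,j,k}x+1$ with $1\le i\le 2^{k-2}$, $0\le j\le d-1$, are pairwise distinct, irreducible over $\mathbb{F}_q$, and their product has degree $2^{k-1}d=\deg\Phi_{2^k}(x^d)$, which gives the first displayed formula.

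For $k=m$ the norm flips sign: $\xi^{q+1}=(\beta_{2^m}^{q+1})^{2i-1}=-1$, so $\beta_{2^m}^{2i-1}\gamma^{j}$ has minimal polynomial $x^2-\theta_{i,j}x-1$ and $-\beta_{2^m}^{2i-1}\gamma^{j}$ has $x^2+\theta_{i,j}x-1$. Moreover, since $q\equiv 2^{m-1}-1\pmod{2^m}$ gives $q(2i-1)\equiv 2^{m-1}-(2i-1)\pmod{2^m}$, which again lies in the lower half $\{1,3,\dots,2^{m-1}-1\}$, the Frobenius now \emph{preserves} the plus-family (and the minus-family); the induced conjugation-involution on plus-indices is $(i,j)\mapsto(2^{m-2}+1-i,\ (-j)\bmod d)$, which is fixed-point-free ($2i=2^{m-2}+1$ is impossible) and for which $\{1,\dots,2^{m-3}\}\times\{0,\dots,d-1\}$ is a transversal, because $i\mapsto 2^{m-2}+1-i$ swaps the lower and upper halves of $\{1,\dots,2^{m-2}\}$. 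Running $(i,j)$ over this transversal lists each distinct plus-quadratic $x^2-\theta_{i,j}x-1$ once and each distinct minus-quadratic $x^2+\theta_{i,j}x-1$ once, yielding $\Phi_{2^m}(x^d)=\prod_{i=1}^{2^{m-3}}\prod_{j=0}^{d-1}(x^2\pm\theta_{i,j}x-1)$. For $k>m$ I would then apply Lemma~\ref{cyclotomic}(iii), $\Phi_{2^k}(y)=\Phi_{2^m}(y^{2^{k-m}})$, and substitute $y=x^d$ to get $\Phi_{2^k}(x^d)=\prod_{i=1}^{2^{m-3}}\prod_{j=0}^{d-1}g^{\pm}_{i,j}(x^{2^{k-m}})$ with $g^{\pm}_{i,j}(z)=z^2\pm\theta_{i,j}z-1$; each $g^{\pm}_{i,j}$ is irreducible of degree $2$ and order $2^m d_j$ with $d_j=d/\gcd(j,d)$ odd, so taking $t=2^{k-m}$ the hypotheses of Lemma~\ref{tirr} hold ($2$ divides $2^md_j$ but not the odd number $(q^2-1)/2^md_j$, and $4\mid q^2-1$ whenever $4\mid t$ since $m\ge3$), whence $g^{\pm}_{i,j}(x^{2^{k-m}})=x^{2^{k-m+1}}\pm\theta_{i,j}x^{2^{k-m}}-1$ is irreducible of degree $2^{k-m+1}$; these stay pairwise distinct, and comparing degrees confirms that all factors have been listed.

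The main obstacle is precisely the index bookkeeping forced by the two regimes. For $2\le k\le m-1$ the Frobenius interchanges the halves ``$+\beta_{2^k}$'' and ``$-\beta_{2^k}$'' of the primitive $2^k$th roots, so a single unsigned family $\{x^2-\theta_{i,j,k}x+1\}_{1\le i\le 2^{k-2}}$ already lists every irreducible factor exactly once; but for $k\ge m$ it preserves each half, and that is exactly what reinstates the $\pm$ and halves the $i$-range to $1\le i\le 2^{m-3}$, so one must pin down the correct transversal of the conjugation-involution and verify it is fixed-point-free. Carrying out the underlying congruence computations cleanly --- in particular the sign identities $\beta_{2^k}^{q+1}=1$ for $k<m$ against $\beta_{2^m}^{q+1}=-1$, and the residue $q\equiv2^{m-1}-1\pmod{2^m}$ --- is where the real effort lies; the irreducibility statements then follow routinely from Lemmas~\ref{tirr} and \ref{factm}.
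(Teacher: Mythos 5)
Your proof is correct and rests on the same core mechanism as the paper's: list the roots $\pm\beta_{2^k}^{2i-1}\gamma^j$ of $\Phi_{2^k}(x^d)$ in $\mathbb{F}_{q^2}$, pair them into Frobenius-conjugate couples, and read off the quadratic minimal polynomials, with the sign of the norm $\xi^{q+1}$ ($+1$ for $2\le k\le m-1$, $-1$ for $k=m$) explaining the change of shape of the factors. The differences are in execution. For $2\le k\le m-1$ the paper reaches the pairing by applying the index permutation $i\mapsto 2^{k-2}-i+1$ to the binomials $x^d+\beta_{2^k}^{d(2i-1)}$, then invoking Lemma \ref{factm} and the rearrangement $j\mapsto qj$ modulo $d$; you instead track the Frobenius action on individual roots, which is equivalent but makes the orbit structure (plus-roots mapped bijectively onto minus-roots) explicit. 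For $k=m$ the paper derives the factorization from the $k=m-1$ case via $\Phi_{2^m}(x^d)=\Phi_{2^{m-1}}(x^{2d})$, the rearrangement $j\mapsto 2j$, and the difference-of-squares identity resting on $\theta_{i,j}^2=\theta_{i,2j,m-1}-2$; you re-derive it from scratch by computing $\xi^{q+1}=-1$, showing the Frobenius now preserves the plus-family, and exhibiting $\{1,\dots,2^{m-3}\}\times\{0,\dots,d-1\}$ as a fixed-point-free transversal of the conjugation involution $(i,j)\mapsto(2^{m-2}+1-i,-j)$. Your route buys two things the paper leaves implicit or omits: the pairwise distinctness of the listed quadratics together with the degree count certifying completeness, and the Lemma \ref{tirr} verification that $x^{2^{k-m+1}}\pm\theta_{i,j}x^{2^{k-m}}-1$ is irreducible for $k>m$ (the paper's proof stops at $k=m$). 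One small observation worth recording: the factor count $2^{k-2}d$ announced in the statement is only correct for $2\le k\le m-1$; for $k\ge m$ both your product and the paper's contain $2^{m-2}d$ irreducible factors.
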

\begin{proof}
 Let $q\equiv3\pmod 4$ and $q\equiv-1\pmod d$.  Then $q^2\equiv1\pmod d$ and $m=\nu_2(q^2-1)$ and from Expression \ref{beta1}, the cyclotomic polynomial $\Phi_{2^k}(x^d)$  over $\mathbb{F}_{q^2}$ in the following form: \begin{eqnarray*}\Phi_{2^k}(x^d)&=&\prod_{i=1}^{2^{k-2}}(x^d-\beta_{2^k}^{d(2i-1)})(x^d+\beta_{2^k}^{d(2i-1)}).\end{eqnarray*}
 
 Using the permutation $i\mapsto 2^{k-2}-i+1$, we have that \begin{eqnarray*}\prod_{i=1}^{2^{k-2}}(x^{d}+\beta_{2^{k}}^{d(2i-1)})&=&\prod_{i=1}^{2^{k-2}}(x^{d}-\beta_{2^{k}}^{d(-2i+1)}).\end{eqnarray*} 
 For each fixed $k$, where $2\le k\le m-1$, since $2^k|(q+1)$, we have $\beta_{2^k}^{q+1}=1$ and hence $\beta_{2^k}^{qd(2i-1)}=\beta_{2^{k}}^{d(-2i+1)}$. Thus the factorization  of $\Phi_{2^k}(x^d)$ over $\mathbb{F}_{q^2}$ reduces to \begin{eqnarray*}\Phi_{2^k}(x^d)&=&\prod_{i=1}^{2^{k-2}}(x^d-\beta_{2^k}^{d(2i-1)})(x^d-\beta_{2^k}^{qd(2i-1)}),\end{eqnarray*} 
In view of Expression \ref{xd-c} of Lemma \ref{factm}, we have \begin{eqnarray*}\Phi_{2^k}(x^d)&=&\prod_{\substack{i=1\\0\le j\le d-1}}^{2^{k-2}}(x-\beta_{2^k}^{2i-1}\gamma^j)(x-\beta_{2^k}^{q(2i-1)}\gamma^j).\end{eqnarray*}
 Since $\gcd(q,d)=1$,  so the least reduced residue system $\{qj\pmod d:0\le j\le d-1\}$ is a rearrangement of   integers $0,1,\cdots, d-1$. Therefore  \begin{eqnarray*}\Phi_{2^k}(x^d)=\prod_{\substack{i=1\\0\le j\le d-1}}^{2^{k-2}}(x-\beta_{2^k}^{2i-1}\gamma^j)(x-\beta_{2^k}^{q(2i-1)}\gamma^{qj}).\end{eqnarray*}For  any $1\le i\le 2^{k-2}$, the minimal polynomial of $\beta_{2^k}^{2i-1}\gamma^j$  is $$x^2-(\beta_{2^k}^{2i-1}\gamma^j+\beta_{2^k}^{q(2i-1)}
\gamma^{qj})x+\beta_{2^k}^{(q+1)(2i-1)}.$$
 Therefore, the irreducible factorization  of $\Phi_{2^k}(x^d)$ over $\mathbb{F}_q$ is \begin{eqnarray}\label{m-1}\Phi_{2^k}(x^d)&=&\prod_{\substack{i=1\\0\le j\le d-1}}^{2^{k-2}}(x^2-\theta_{i,j,k}x+1)\end{eqnarray} with $\theta_{i,j,k}=\beta_{2^k}^{2i-1}\gamma^j+
\beta_{2^k}^{q(2i-1)}
\gamma^{qj}$ for $1\le i\le 2^{k-2}$, $1\le j\le d-1$ and $2\le k\le m-1$. Since $\Phi_{2^m}(x^d)=\Phi_{2^{m-1}}(x^{2d})$. It follows that, from Expression
 \ref{m-1}, \begin{eqnarray*}\Phi_{2^m}(x^d)=\prod_{\substack{i=1\\0\le j\le d-1}}^{2^{m-3}}(x^4-\theta_{i,j,m-1}x^2+1).\end{eqnarray*}  Since $d$ is odd, so the least residue system $\{2j:0\le j\le d-1\}$ is a rearrangement of integers $0,1,\cdots, d-1$. Therefore \begin{eqnarray*}\Phi_{2^m}(x^d)&=&\prod_{\substack{i=1\\0\le j\le d-1}}^{2^{m-3}}(x^4-\theta_{i,2j,m-1}x^2+1).\end{eqnarray*}
Now for $m\ge3$, denote $\theta_{i,j}=\beta_{2^m}^{2i-1}\gamma^j+
\beta_{2^m}^{q(2i-1)}
\gamma^{qj}$ for $1\le i\le 2^{m-3}$ and $0\le j\le d-1$. Since $m-1=\nu_2(q+1)\ge2$ for $q\equiv3\pmod 4$, so that $\beta_{2^m}^{q+1}=-1$. Note that $\theta_{i,j}^2=\theta_{i,2j,m-1}-2$ for every $1\le i\le 2^{m-3}$ and $0\le j\le d-1$. It follows that $$x^4-\theta_{i,2j,m-1}x^2+1=(x^2-1)^2-\theta_{i,j}^2x^2=
(x^2-\theta_{i,j}x-1)(x^2+\theta_{i,j}x-1).$$  Therefore \begin{eqnarray}\Phi_{2^m}(x^d)=\prod_{\substack{i=1\\0\le j\le d-1}}^{2^{m-3}}(x^2-\theta_{i,j}x-1)(x^2+\theta_{i,j}x-1). \end{eqnarray} This proves the result.
\end{proof}

\section{The factorization of $x^{2^nd}-1$ when $d|(q+1)$}
 This section determines  the explicit factorization of $x^{2^nd}-1$ over $\mathbb{F}_{q}$, where $d$ is an odd divisor of $q+1$ and integer $n\ge1$.   We begin with the factorization of $x^d-1$ and $x^d+1$ over $\mathbb{F}_q$.

\begin{lem}\label{d} Let $\mathbb{F}_q$ be a finite field and, let $d$ be an odd integer such that $d|(q+1)$. Then
 \begin{eqnarray*} \label{ed}
x^{d}-1=\displaystyle{(x-1)\prod_{j=1}^{(d-1)/2}(x^2-\delta_jx+1)}\end{eqnarray*}  and \begin{eqnarray*} \label{ed}
x^{d}+1=\displaystyle{(x+1)\prod_{j=1}^{(d-1)/2}(x^2+\delta_jx+1)},\end{eqnarray*}
   where $\delta_j=\gamma^j+\gamma^{qj}\in\mathbb{F}_q^*$ for every $1\le j\le d-1$ and $\gamma$ be a primitive $d$th root of unity in $\mathbb{F}_{q^2}^*$.  \end{lem}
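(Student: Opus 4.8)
The plan is to realize the roots of $x^{d}-1$ explicitly inside $\mathbb{F}_{q^{2}}$, group them into Frobenius orbits over $\mathbb{F}_q$ to read off the quadratic factors, and then obtain the companion identity for $x^{d}+1$ by the substitution $x\mapsto -x$. The starting observation is that, since $d$ is odd and $d\mid q+1$, we have $\gcd(d,q-1)=1$ (because $\gcd(q+1,q-1)\mid 2$), so $\gamma$ has order $d$ in $\mathbb{F}_{q^{2}}^{*}$, $\gamma^{q}=\gamma^{-1}$, and the $d$ distinct $d$th roots of unity $1,\gamma,\dots,\gamma^{d-1}$ all lie in $\mathbb{F}_{q^{2}}$; hence over $\mathbb{F}_{q^{2}}$ one has $x^{d}-1=\prod_{j=0}^{d-1}(x-\gamma^{j})$. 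This is just the $\ell=d$ case of Expression \ref{xd-c} in Lemma \ref{factm}.

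Next, because $d$ is odd the sets $\{j\}$ and $\{d-j\}$ for $1\le j\le (d-1)/2$ partition $\{1,\dots,d-1\}$, and $\gamma^{d-j}=\gamma^{-j}=\gamma^{qj}$, so pairing conjugate roots gives, for each $1\le j\le (d-1)/2$,
\[
(x-\gamma^{j})(x-\gamma^{qj})=x^{2}-(\gamma^{j}+\gamma^{qj})x+\gamma^{(q+1)j}=x^{2}-\delta_{j}x+1 ,
\]
using $\gamma^{(q+1)j}=1$. Multiplying these together with the linear factor $x-1$ coming from $j=0$ yields the asserted factorization of $x^{d}-1$. To see that this is genuinely a factorization over $\mathbb{F}_q$ (indeed into irreducibles) I would check three elementary points: $\delta_{j}=\gamma^{j}+\gamma^{qj}\in\mathbb{F}_q$ (apply the Frobenius $t\mapsto t^{q}$ and use $\gamma^{q^{2}}=\gamma$); $\delta_{j}\neq 0$ (otherwise $\gamma^{2j}=-\gamma^{2qj}$ gives $\gamma^{4j}=1$ after using $\gamma^{(q+1)j}=1$... more directly $\gamma^{2j}=-1$, impossible since $\gamma^{2j}$ would then have even order dividing the odd number $d$); and $\gamma^{j}\notin\mathbb{F}_q$ for $1\le j\le d-1$, which is exactly the argument already carried out in the proof of Lemma \ref{factm} ($\gamma^{j}\in\mathbb{F}_q$ would force $d\mid(q-1)j$, hence $d\mid j$ since $\gcd(d,q-1)=1$). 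Thus each $x^{2}-\delta_{j}x+1$ is the minimal polynomial of $\gamma^{j}$ over $\mathbb{F}_q$, and distinctness of the $(d-1)/2$ quadratic factors follows from Lemma \ref{delta}, which gives $\delta_{j_{1}}\neq\delta_{j_{2}}$ for distinct $j_{1},j_{2}\in\{1,\dots,(d-1)/2\}$.

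For $x^{d}+1$ I would simply substitute $x\mapsto -x$ in the first identity: since $d$ is odd, $(-x)^{d}-1=-(x^{d}+1)$, whence
\[
x^{d}+1=-\bigl((-x)-1\bigr)\prod_{j=1}^{(d-1)/2}\bigl((-x)^{2}-\delta_{j}(-x)+1\bigr)=(x+1)\prod_{j=1}^{(d-1)/2}(x^{2}+\delta_{j}x+1),
\]
as claimed; alternatively one can note that the roots of $x^{d}+1$ are precisely $-\gamma^{j}$, $0\le j\le d-1$, and repeat the pairing argument verbatim. There is no substantial obstacle in this lemma: the entire content is the three small verifications above ($\delta_{j}\in\mathbb{F}_q^{*}$ and $\gamma^{j}\notin\mathbb{F}_q$), all of which rest on the single arithmetic fact $\gcd(d,q-1)=1$, and these have essentially already appeared in Lemmas \ref{factm}--\ref{delta}.
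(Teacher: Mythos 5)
Your proof is correct and follows essentially the same route as the paper, which simply invokes Lemma \ref{factm} (Expression \ref{xd-c2}) with $b=\gamma$ and $c=\pm1$; you have merely unpacked the same conjugate-pairing argument in detail and added the harmless extra verification that $\delta_j\neq0$. The substitution $x\mapsto-x$ for the second identity is an equivalent cosmetic variant of taking $a=-1$ in Lemma \ref{factm}.
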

\begin{proof}   The result follows directly  by  substituting $b=\gamma$ and $c=\pm1$ in Expression \ref{xd-c2} of  Lemma \ref{factm}.\end{proof}

 In view of Lemma \ref{d}, \begin{eqnarray*} \label{ed1}
x^{2d}-1=\displaystyle{(x\pm1)\prod_{j=1}^{(d-1)/2}(x^2\pm\delta_jx+1)}. \end{eqnarray*}  where $q\equiv-1\pmod d$.  Further, for each $2\le n\le m-1$, since $2^n|(q-1)$ if $q\equiv1\pmod 4$ and $2^n|(q+1)$ if $q\equiv3\pmod 4$,  therefore the factorization of $x^{2^nd}-1$ over $\mathbb{F}_q$ will be determined for $q\equiv1\pmod 4$ and $q\equiv3\pmod 4$ separately. 
\subsection{Case: $q\equiv1\pmod 4$}
\begin{thm}\label{q2nd4k+1} Let $\mathbb{F}_q$ be a finite field with  $q\equiv1\pmod 4$ elements and   $d$  be an odd integer such that $q\equiv-1\pmod d$.     
\begin{itemize}
\item[(i)] If $2\le n\le s$,  then the factorization of $x^{2^nd}-1$ into the product of $2^{n-1}(d+1)$ monic irreducible factors over $\mathbb{F}_{q}$ is given by:
 \begin{eqnarray*} 
x^{2^nd}-1\nonumber&=&\displaystyle{(x-1)(x+1)\prod_{\substack{i=1\\2\le k\le n}}^{2^{k-2}}(x\pm\alpha_{2^k}^{2i-1})}\times\\&&
\prod_{\substack{i=1\\1\le j\le (d-1)/2\\2\le k\le n}}^{2^{k-2}}\bigg((x^2\pm\delta_jx+1)
(x^2\pm\alpha_{2^k}^{2i-1}
\delta_jx+\alpha_{2^{k-1}}^{2i-1})\bigg).
\end{eqnarray*}  
 \item[(ii)] If $n>s\ge2$, the factorization of $x^{2^nd}-1$ into the product of $  2^{s-1}((n-s+1)d+1)$ monic irreducible factors over $\mathbb{F}_{q}$ is given by:
 \begin{eqnarray*} 
x^{2^nd}-1&=&(x^{2^sd}-1)
\prod_{\substack{i=1\\1\le r\le n-s\\1\le j\le (d-1)/2\\0\le l\le 1}}^{2^{s-2}}\bigg((x^{2^r}\pm\alpha_{2^s}^{2i-1})\big(x^{2^{r}}\pm
\alpha_4^l\beta_{2^{s+1}}^{2i-1}\theta_jx^{2^{r-1}}-
\alpha_2^l
\alpha_{2^{s}}^{2i-1}\big)\bigg)\end{eqnarray*}  where    the complete factorization of $x^{2^sd}-1$ is given in  Case (i). 
\end{itemize}\end{thm}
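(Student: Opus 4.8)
The plan is to obtain both claimed factorizations by assembling three results already proved: the "telescoping" decomposition of $x^{2^nd}-1$ into cyclotomic blocks from Lemma~\ref{club}, the factorization of $x^d\pm 1$ from Lemma~\ref{d}, and the factorization of the decomposable cyclotomic polynomials $\Phi_{2^k}(x^d)$ over $\mathbb{F}_q$ from Theorem~\ref{phi4k+1}. The organizing remark is that $q\equiv 1\pmod 4$ forces $m=s+1$, hence $m-1=s$; this is exactly what makes the two branches of Lemma~\ref{club} correspond to items (i) and (ii) of the present theorem. One also notes $\gcd(2^nd,q)=1$, so $x^{2^nd}-1$ is squarefree, and therefore the product we exhibit is automatically the \emph{complete} factorization once each displayed factor is known to be monic irreducible over $\mathbb{F}_q$.

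For part~(i), with $2\le n\le s=m-1$, I would invoke the first branch of Lemma~\ref{club} to write $x^{2^nd}-1=(x^d-1)(x^d+1)\prod_{k=2}^{n}\Phi_{2^k}(x^d)$, and then substitute the factorizations of $x^d-1$ and $x^d+1$ from Lemma~\ref{d} together with, for each $k$ with $2\le k\le n\le s$, the factorization of $\Phi_{2^k}(x^d)$ from Theorem~\ref{phi4k+1}(i). Collecting and regrouping the resulting polynomials gives the displayed formula; every factor produced is monic irreducible over $\mathbb{F}_q$ by those results, and since $x^{2^nd}-1$ is squarefree this is the complete factorization. The count is obtained by addition: $(x^d-1)(x^d+1)$ supplies $d+1$ irreducible factors and each $\Phi_{2^k}(x^d)$ supplies $2^{k-2}(d+1)$, for a total of $(d+1)\bigl(1+\sum_{k=2}^{n}2^{k-2}\bigr)=2^{n-1}(d+1)$.

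For part~(ii), with $n>s$, i.e.\ $n\ge m$, I would use the second branch of Lemma~\ref{club}; since $m=s+1$ and $2^{m-1}=2^s$ this reads
\[
x^{2^nd}-1=(x^{2^sd}-1)\prod_{r=0}^{n-s-1}\Phi_{2^{s+1}}\!\left(x^{2^rd}\right).
\]
Because $x^{2^rd}=(x^d)^{2^r}$, Lemma~\ref{cyclotomic}(iii) yields $\Phi_{2^{s+1}}\!\left(x^{2^rd}\right)=\Phi_{2^{s+1+r}}(x^d)$, so after reindexing the product to run over $1\le r\le n-s$ the right-hand side becomes $(x^{2^sd}-1)\prod_{r=1}^{n-s}\Phi_{2^{s+r}}(x^d)$. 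Inserting the already-proved part~(i) (taken at $n=s$) for the leading factor $x^{2^sd}-1$, and Theorem~\ref{phi4k+1}(ii) for each $\Phi_{2^{s+r}}(x^d)$, reproduces exactly the displayed factorization, all of whose factors are irreducible over $\mathbb{F}_q$ by those two results. The count is $2^{s-1}(d+1)$ from $x^{2^sd}-1$ plus $n-s$ copies of $2^{s-1}d$, one per $\Phi_{2^{s+r}}(x^d)$, i.e.\ $2^{s-1}\bigl((n-s+1)d+1\bigr)$.

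I do not expect a genuine obstacle here: the substantive work — the irreducibility tests via Lemma~\ref{tirr} and the manipulations of the recursive sequences $(\delta_j)$ and $(\theta_j)$ — has already been carried out inside Theorem~\ref{phi4k+1} and Lemma~\ref{d}, so this proof is essentially a bookkeeping assembly. The only points demanding care are the range matching (which rests entirely on $m=s+1$ for $q\equiv1\pmod4$), the passage $\Phi_{2^m}(x^{2^rd})=\Phi_{2^{s+r}}(x^d)$ via Lemma~\ref{cyclotomic}(iii) and the attendant reindexing, and the arithmetic of the factor counts; none is hard, but each should be stated cleanly.
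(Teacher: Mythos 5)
Your proposal is correct and follows essentially the same route as the paper's own proof: both apply Lemma~\ref{club} with the observation $m=s+1$, then substitute Lemma~\ref{d} and Theorem~\ref{phi4k+1} into the two branches and add up the factor counts. Your write-up is in fact slightly more careful than the paper's, since you make the reindexing $\Phi_{2^{s+1}}(x^{2^rd})=\Phi_{2^{s+r+1}}(x^d)$ and the squarefreeness of $x^{2^nd}-1$ explicit.
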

\begin{proof} If $q\equiv1\pmod 4$, then $m=s+1$. By  Lemma \ref{club}, the factorization of $x^{2^nd}-1$ over $\mathbb{F}_q$ can be viewed as: \begin{eqnarray*} \label{club1}
x^{2^nd}-1=\left\{ \begin{array}{lcl}
 \displaystyle{(x^{d}-1)(x^{d}+1)\prod_{k=2}^{n}\Phi_{2^{k}}(x^d)} & \mbox{for} & 2\le n\le s\\\displaystyle{(x^{2^{s}d}-1)\prod_{r=1}^{n-s}\Phi_{2^{s}}(x^{2^{r}d})}& \mbox{for}
& n>s\ge2.
\end{array}\right.\end{eqnarray*} 
{\bf{Case (i)}} For any $2\le n\le s$, by Lemma \ref{d} and   Theorem \ref{phi4k+1}, the factorization of  $x^{2^nd}-1$ into   the product of  $2^{n-1}(d+1)$ monic  irreducible factors can be  obtained.\\ 
{\bf{Case (ii)}} For $n>s$, by Theorem \ref{phi4k+1},  we obtain the required form of  the factorization of $x^{2^nd}-1$ over $\mathbb{F}_q$ with the number of monic irreducible factors of $x^{2^nd}-1$ over $\mathbb{F}_q$ is the sum of the number of irreducible factors of $x^{2^sd}-1$ and $2^{s-2}(n-s)2+2^{s-2}(n-s)\dfrac{(d-1)}{2}\dot 4$. In view of case (i),  the number of monic irreducible factors of $x^{2^sd}-1$ over $\mathbb{F}_q$ is  $2^{s-1}(d+1)$ and hence the number of monic irreducible factors of $x^{2^nd}-1$ over $\mathbb{F}_q$ is \begin{eqnarray*}
2^{s-1}(d+1)+2^{s-1}(n-s)d=2^{s-1}((n-s+1)d+1).
\end{eqnarray*}
This completes the proof.
\end{proof}
\subsection{Case: $q\equiv3\pmod 4$}

\begin{thm}\label{q2nd4k-1}Let $q\equiv3\pmod 4$ and $d$ be an odd integer such that $q\equiv-1\pmod d$. Then   \begin{itemize}

\item[(i)] If $2\le n\le m-1$, the factorization of $x^{2^nd}-1$ into the product of $2^{n-1}d+1$ irreducible factors  over $\mathbb{F}_{q}$  is given by: 
\begin{eqnarray*} 
x^{2^nd}-1&=&(x-1)(x+1)
\prod_{j=1}^{(d-1)/2}(x^2\pm\delta_jx+1)
\prod_{\substack{i=1\\2\le k\le n\\0\le j\le d-1}}^{2^{k-2}}(x^2-\theta_{i,j,k}x+1),\end{eqnarray*} where $\theta_{i,j,k}=\beta_{2^k}^{2i-1}\gamma^j+
\beta_{2^k}^{q(2i-1)}\gamma^{qj}$ for  $1\le i\le 2^{k-2}$ and  $0\le j\le d-1$.
\item[(ii)] If $n\ge m\ge 3$,  the factorization of $x^{2^nd}-1$ into the product of $2^{m-2}d(n-m+2)+1$ irreducible factors  over $\mathbb{F}_{q}$  is given by:   \begin{eqnarray*} 
x^{2^nd}-1=\displaystyle{(x^{2^{m-1}d}-1)
\prod_{\substack{i=1\\0\le r\le n-m\\0\le j\le d-1}}^{2^{m-3}}(x^{2^{r+1}}\pm\theta_{i,j}x^{2^r}-1)}.\end{eqnarray*}
 where  $\theta_{i,j}=\beta_{2^m}^{2i-1}\gamma^j+
\beta_{2^m}^{q(2i-1)}\gamma^{qj}$ for $1\le i\le 2^{m-3}$, $0\le j\le d-1$. The complete factorization of $x^{2^{m-1}d}-1$ over $\mathbb{F}_q$ is a special case of the factorization given above in (i).
\end{itemize} 
\end{thm}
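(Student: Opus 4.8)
The plan is to derive both parts directly from the structural decomposition in Lemma~\ref{club} by substituting the already-established factorizations of the building blocks. Since $q$ is odd and $d\mid(q+1)$ forces $\gcd(d,q)=1$, the polynomial $x^{2^nd}-1$ is separable, so any product decomposition into monic irreducibles we exhibit is automatically the \emph{complete} factorization; the only real content is assembling the pieces and counting them.

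For part (i), with $2\le n\le m-1$, Lemma~\ref{club} gives $x^{2^nd}-1=(x^d-1)(x^d+1)\prod_{k=2}^{n}\Phi_{2^k}(x^d)$. I would substitute the factorizations of $x^d-1$ and $x^d+1$ from Lemma~\ref{d}, and for each $k$ with $2\le k\le n\le m-1$ substitute the first branch of Theorem~\ref{phi2nd3}, namely $\Phi_{2^k}(x^d)=\prod_{i=1}^{2^{k-2}}\prod_{0\le j\le d-1}(x^2-\theta_{i,j,k}x+1)$. Collecting the two linear factors $x\pm1$, the $d-1$ quadratics $x^2\pm\delta_jx+1$ coming from $x^d\mp1$, and the quadratics $x^2-\theta_{i,j,k}x+1$ for $2\le k\le n$ yields exactly the claimed product. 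The count is $2+(d-1)+\sum_{k=2}^{n}2^{k-2}d=1+d+d(2^{n-1}-1)=2^{n-1}d+1$.

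For part (ii), with $n\ge m\ge3$, Lemma~\ref{club} gives $x^{2^nd}-1=(x^{2^{m-1}d}-1)\prod_{r=0}^{n-m}\Phi_{2^m}(x^{2^rd})$. Using Lemma~\ref{cyclotomic}(iii) in the form $\Phi_{2^m}(x^{2^rd})=\Phi_{2^{m+r}}(x^d)$ and applying the second branch of Theorem~\ref{phi2nd3} with $k=m+r$ (so $k-m=r$, $k-m+1=r+1$), each factor becomes $\prod_{i=1}^{2^{m-3}}\prod_{0\le j\le d-1}(x^{2^{r+1}}\pm\theta_{i,j}x^{2^r}-1)$, which is precisely the product displayed in the statement; the base polynomial $x^{2^{m-1}d}-1$ is the $n=m-1$ instance of part~(i). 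For the count, part~(i) gives $2^{m-2}d+1$ factors for $x^{2^{m-1}d}-1$, and each $r$ from $0$ to $n-m$ contributes $2^{m-3}\cdot d\cdot 2=2^{m-2}d$ factors, so the total is $2^{m-2}d+1+(n-m+1)2^{m-2}d=2^{m-2}d(n-m+2)+1$.

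The main thing to watch — rather than a genuine obstacle — is bookkeeping: matching the running index $k$ in Theorem~\ref{phi2nd3} to the shifted index $m+r$ used here, checking that the $\pm$ in the $n\ge m$ regime genuinely doubles the factor count while the $n\le m-1$ regime does not, and recording that the hypothesis $q\equiv3\pmod4$ gives $s=1$ and $m=\nu_2(q^2-1)\ge3$, so the range $2\le n\le m-1$ in part~(i) is nonempty and the two parts overlap consistently at $n=m-1$. No irreducibility has to be re-verified, since that was already done inside Theorem~\ref{phi2nd3} and Lemma~\ref{d}.
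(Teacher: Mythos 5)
Your proposal is correct and follows essentially the same route as the paper: decompose via Lemma~\ref{club}, substitute Lemma~\ref{d} and the two branches of Theorem~\ref{phi2nd3}, and count. The only cosmetic difference is that the paper re-invokes Lemma~\ref{tirr} in Case~(ii) to confirm irreducibility of the trinomials $x^{2^{r+1}}\pm\theta_{i,j}x^{2^r}-1$, whereas you defer that entirely to the statement of Theorem~\ref{phi2nd3}, which is legitimate as that theorem is stated for all $k\ge m$.
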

\begin{proof} Let $d$ be an odd divisor of $q+1$.  Then, by Lemma \ref{club}, 
\begin{eqnarray*} 
x^{2^nd}-1=\left\{ \begin{array}{lcl}
 \displaystyle{(x^{d}-1)(x^{d}+1)\prod_{k=2}^{n}\Phi_{2^{k}}(x^d)} & \mbox{for} & 2\le n\le m-1\\\displaystyle{(x^{2^{m-1}d}-1)\prod_{r=0}^{n-m}\Phi_{2^m}(x^{2^rd})}& \mbox{for}
& 3\le m\le n.
\end{array}\right.\end{eqnarray*}
{\bf{Case (i)}} For any  $2\le n\le m-1$, the factorization of $x^{2^nd}-1$ over $\mathbb{F}_q$ follows from Lemma \ref{d} and Theorem \ref{phi2nd3} such as:  \begin{eqnarray*}x^{2^nd}-1&=&(x-1)(x+1)\prod_{j=1}^{(d-1)/2}(x^2\pm\delta_jx+1)\prod_{\substack{i=1\\2\le k\le n\\0\le j\le d-1}}^{2^{k-2}}(x^2-\theta_{i,j,k}x+1),  \end{eqnarray*} where    $\theta_{i,j,k}=\beta_{2^k}^{2i-1}\gamma^j+
\beta_{2^k}^{q(2i-1)}
\gamma^{qj}$ for $1\le i\le 2^{k-2}$, $0\le j\le d-1$ and $2\le k\le m-1$.   Further, the number of irreducible factors of $x^{2^nd}-1$ is $$1+1+(d-1)+\sum_{2\le k\le n}2^{k-2}d=(d+1)+d(2^{n-1}-1)=2^{n-1}d+1.$$ 
{\bf{Case (ii)}} For any $n\ge m\ge 3$,  the factorization of $x^{2^nd}-1$ over $\mathbb{F}_q$ follows from Case (i) for $n=m-1$ and Theorem \ref{phi2nd3} such as: \begin{eqnarray*}x^{2^nd}-1&=&(x^{2^{m-1}d}-1)
\prod_{\substack{i=1\\0\le j\le d-1\\0\le r\le n-m}}^{2^{m-3}}(x^{2^{r+1}}\pm\theta_{i,j}x^{2^r}-1).\end{eqnarray*} By Lemma \ref{tirr}, it is quite easy to verify that trinomials $x^{2^{r+1}}\pm\theta_{i,j}x^{2^r}-1$ over $\mathbb{F}_q$  are irreducible over $\mathbb{F}_q$. The number of irreducible factors of $x^{2^nd}-1$ is $$2^{m-2}d+1+2^{m-2}d(n-m+1)=2^{m-2}d(n-m+2)+1.$$ This finishes the proof.\end{proof}

\subsection*{Worked examples}

\begin{Exaexa} Let $q=29$ and $n\ge3$. Then $s=2$, $d=15$, $\alpha_{4}=12$, $\alpha_{8}=2\sqrt{3}$, $\gamma=2+\sqrt{3}$, $\gamma^{-1}=2-\sqrt{3}$, $\delta_1=4$ and   $\theta_1=2\sqrt{3}$. By Lemma \ref{gammad}, we obtain  $\delta_2=\delta_1\delta_1-2=14$, $\delta_3=\delta_1\delta_2-\delta_1=23$, $\delta_4=4\cdot 23-14=20$, $\delta_5=28$, $\delta_6=5$ and  $\delta_7=21$. Further, again by Lemma \ref{gammad},  $\theta_2=\delta_1\theta_1=8\sqrt{3}$, $\theta_3=\delta_1\theta_2-\theta_1=\sqrt{3}$, $\theta_4=-4\sqrt{3}$, $\theta_5=12\sqrt{3}$, $\theta_6=-6\sqrt{3}$, $\theta_7=-7\sqrt{3}$. By Theorem \ref{q2nd4k+1}  the explicit factorization of $x^{2^n15}-1$ over $\mathbb{F}_{29}$ is given by: 
 \begin{eqnarray*} 
x^{2^n15}-1&=&(x^{60}-1)
\prod_{\substack{j=1\\1\le r\le n-2\\0\le l\le 1}}^{7}(x^{2^r}\pm\alpha_4))(x^{2^{r}}\pm\alpha_4^l\alpha_{8}\theta_jx^{2^{r-1}}-\alpha_2^l
\alpha_{4}\big)\end{eqnarray*} with  \begin{eqnarray*} 
x^{60}-1&=&(x-1)(x+1)(x-\alpha_4)(x+\alpha_4)
\prod_{\substack{j=1}}^{7}(x^2\pm\delta_jx+1)\big(x^2\pm\alpha_{4}\delta_jx+
\alpha_{2}\big).\end{eqnarray*} 
 
\end{Exaexa}
\begin{Exaexa}Let $q=59$ and $n=4$. Then $m=3$ and $d=15$.  As $q\equiv3\pmod 8$,  $\sqrt{2}\notin\mathbb{F}_{59}^*$, we  first notice that $\beta_4=18\sqrt{2}$, $\beta_4^{-1}=-18\sqrt{2}$, $\beta_8=18+\dfrac{1}{\sqrt{2}}$, and hence $\beta_8^{-1}=-18+\dfrac{1}{\sqrt{2}}$. Further, $\gamma=-5-22\sqrt{2}$,  $\gamma^{-1}=-5+22\sqrt{2}$, $\gamma^2=-10-16\sqrt{2}$  and  $\{\delta_j:1\le j\le 7\}=\{-10,-20,-26, -15, -1,25,-13\}$.   It is easy to see that $\beta_4, \beta_8, \gamma$ are elements in $\mathbb{F}_{59^2}\setminus\mathbb{F}_{59}$ as $\sqrt{2}\notin\mathbb{F}_{59}^*$.  By Theorem \ref{q2nd4k-1}, the factorization of $x^{240}-1$ over $\mathbb{F}_{59}$ is given by    \begin{eqnarray*} 
x^{240}-1=\displaystyle{(x^{60}-1)\prod_{\substack{0\le r\le 1\\0\le j\le 14}}(x^{2^{r+1}}\pm\theta_{1,j}x^{2^r}-1)},\end{eqnarray*}
 where  $\theta_{1,j}=\beta_{8}\gamma^j-
\beta_{8}^{-1}\gamma^{-j}$ for  $0\le j\le 14$ and \begin{eqnarray*} 
x^{60}-1=\displaystyle{(x^{4}-1)
\prod_{j=1}^{7}(x^2\pm\delta_jx+1)
\prod_{\substack{1\le j\le 14}}(x^2-\theta_{1,j,2}x+1)}\end{eqnarray*} with $\theta_{1,j,2}=\beta_{4}\gamma^j+
\beta_{4}^{-1}\gamma^{-j}$ for every  $1\le j\le 14$.  \end{Exaexa}
\section*{Acknowledgments} The author would like to thank the anonymous referee of an earlier version that allowed us to discover a gap in the proof of Theorem 3.2.

\end {document}